\theoremstyle{plain}
\newtheorem{theorem}{Theorem}[section]
\newtheorem{proposition}[theorem]{Proposition}
\newtheorem*{theorem*}{Theorem}
\newtheorem{corollary}[theorem]{Corollary}
\newtheorem*{lemma*}{Lemma}
\newtheorem{lemma}[theorem]{Lemma}
\theoremstyle{remark}
\newtheorem{remark}[theorem]{Remark}
\newtheorem*{case*}{Case}
\theoremstyle{definition}
\newcommand{\stab}{\mathrm{stab}}
\newcommand{\Z}{\mathbb{Z}}
\newcommand{\Q}{\mathbb{Q}}
\newcommand{\C}{\mathbb{C}}
\newcommand{\F}{\mathbb{F}}
\newcommand{\K}{\mathbb{K}}
\newcommand{\nm}{\mathrm{Nm}}
\newcommand{\cP}{\mathcal{P}}
\newcommand{\cF}{\mathcal{F}}
\newcommand{\ind}{\mathrm{Ind}}
\newcommand{\res}{\mathrm{Res}}\newcommand{\Res}{\mathrm{Res}}
\newcommand{\irr}{\mathrm{Irr}}
\newcommand{\wh}[1]{\widehat{#1}}\newcommand{\wt}[1]{\widetilde{#1}}
\newcommand{\tr}{\mathrm{Tr}}
\newcommand{\gal}{\mathrm{Gal}}
\newcommand{\bg}[1]{\textbf{#1}}
\title{Fields of character values for finite special unitary groups}
\author{A. A. Schaeffer Fry\\ \small\textit{Department of Mathematical and Computer Sciences}\\\small\textit{Metropolitan State University of Denver}\\ \small\textit{Denver, CO 80217, USA} \\ \small\tt{aschaef6@msudenver.edu}
\and
C. Ryan Vinroot\\ \small\textit{Department of Mathematics}\\\small\textit{College of William and Mary}\\\small\textit{Williamsburg, VA 23187, USA}\\ \small\tt{vinroot@math.wm.edu}}
\date{}
\begin{document}
\maketitle
\begin{abstract}
Turull has described the fields of values for characters of $SL_n(q)$ in terms of the parametrization of the characters of $GL_n(q)$.  In this article, we extend these results to the case of $SU_n(q)$.\\ 
\\
\noindent 2010 {\em AMS Subject Classification}: 20C15, 20C33
\end{abstract}

\section{Introduction}

It is a problem of general interest to understand the fields of values of the complex characters of finite groups, as these fields often reflect important or subtle properties of the group itself.  Turull \cite[Section 4]{turull01} computed the fields of character values of the finite special linear groups $SL_n(q)$ by using properties of degenerate Gelfand-Graev characters of $GL_n(q)$.  In this paper, we extend these methods to compute the fields of character values for the finite special unitary groups $SU_n(q)$.  In particular, we use properties of generalized Gelfand-Graev characters of $SU_n(q)$ and the full unitary group $GU_n(q)$ to get this information.  Further, we frame these methods so that we obtain many results for both $SL_n(q)$ and $SU_n(q)$ simultaneously.

Turull also computes the Schur indices of the characters of $SL_n(q)$.  This appears to be a much more difficult problem for $SU_n(q)$.  For example, it is helpful in the $SL_n(q)$ case that the Schur index for every character of $GL_n(q)$ is 1.  However, the Schur indices of the characters of $GU_n(q)$ are not all explicitly known, but are known to take values other than 1.

This paper is organized as follows.  In Section \ref{sec:Chars}, we establish the necessary results from character theory that are needed for the main arguments.  In Sections \ref{sec:LusztigInd} and \ref{sec:Param}, we give some tools from Deligne-Lusztig theory and the parameterization of the characters of $GL^{\epsilon}_n(q)$, respectively, and we use these to describe the characters of $SL^{\epsilon}_n(q)$ in Section \ref{sec:Restriction}.  We introduce generalized Gelfand-Graev characters in Section \ref{sec:GGGR}.  In Section \ref{sec:Initial}, we obtain some preliminary results on fields of character values which follow quickly from the material in Section \ref{sec:Chars}.  To deal with the harder cases, we need some explicit information on unipotent elements obtained in Section \ref{sec:Unipotent}, and we apply this information to generalized Gelfand-Graev characters in Section \ref{sec:MoreGGGR}.  Finally, in Section \ref{sec:Main} we prove our main results in Theorem \ref{thm:turullext1} and Corollary \ref{cor:turullext}, which give explicitly the fields of values of any character of $SU_n(q)$ and a description of the real-valued characters of $SU_n(q)$, as well as recover the corresponding results for $SL_n(q)$ originally found in \cite[Section 4]{turull01}.

\subsection*{Notation}

We will often use the notations found in \cite{turull01}, for clarity of analogous statements.  For example, the natural action of a Galois automorphism $\sigma\in\mathrm{Gal}(\bar{\Q}/\Q)$ on a character $\chi$ of a group will be denoted $\sigma\chi$.  Here for a group element $g$, the value of $\sigma\chi$ is given by $\sigma\chi(g)=\sigma(\chi(g))$.   We write $\Q(\chi)$ for the field obtained from $\Q$ by adjoining all  values of the character $\chi$.

For an integer $n$, we will write $n=n_2n_{2'}$ where $n_2$ is a $2$-power and $n_{2'}$ is odd. Further, for an element $x$ of a finite group $Y$, we write $x=x_2x_{2'}$ where $x_2$ has $2$-power order and $x_{2'}$ has odd order.  We denote by $|x|$ the order of the element $x$ (we also use this notation for cardinality and size of partitions, which will be clear from context).  We write $\irr(Y)$ for the set of all irreducible complex characters of the group $Y$.  Given two elements $g, x$ in $Y$, we write $g^x = x^{-1}gx$, and for $\chi \in \irr(Y)$, we define $\chi^x$ by $\chi^x(g) = \chi(xgx^{-1})$.  

For a subgroup $X\leq Y$, we write $\ind_X^Y(\varphi)$ for the character of $Y$ induced from a character $\varphi$ of $X$, and we write $\res^Y_X(\chi)$ for the character of $X$ restricted from a character $\chi$ of $Y$.  We will further use $\irr(Y|\varphi)$ and $\irr(X|\chi)$  to denote the set of irreducible constituents of $\ind_X^Y(\varphi)$ and $\res^Y_X(\chi)$, respectively. 

Throughout the article, let $q$ be a power of a prime $p$ and let $G=SL^\epsilon_n(q)$ and $\wt{G}=GL^\epsilon_n(q)$, where $\epsilon\in\{\pm1\}$.  Here when $\epsilon = 1$, we mean  $\wt{G}=GL_n(q)$ and $G=SL_n(q)$, and when $\epsilon=-1$, we mean $\wt{G}=GU_n(q)$ and $G=SU_n(q)$.   We also write $\bg{G}=SL_n(\bar{\F}_q)$ and $\wt{\bg{G}}=GL_n(\bar{\F}_q)$ for the corresponding algebraic groups, so that $\wt{G}=\wt{\bg{G}}^{F_\epsilon}$ and $G=\bg{G}^{F_\epsilon}$ for an appropriate Frobenius morphism $F_{\epsilon}$.

\section{Characters} \label{sec:Chars}

\subsection{Lusztig Induction} \label{sec:LusztigInd}

  For this section, we let $\mathbf{H}$ be any connected reductive group over $\bar{\F}_q$ with Frobenius map $F$, and write $H = \mathbf{H}^F$.  For any $F$-stable Levi subgroup $\mathbf{L}$ of $\mathbf{H}$, contained in a parabolic subgroup $\mathbf{P}$, we write $L = \mathbf{L}^F$ and denote by $R_L^H = R_{\mathbf{L} \subset \mathbf{P}}^{\mathbf{H}}$ the Lusztig (or twisted) induction functor.  When $\mathbf{P}$ may be chosen to be an $F$-stable parabolic, then $R_L^H$ becomes Harish-Chandra induction.  When $\mathbf{L} = \mathbf{T}$ is chosen to be a maximal torus and $\theta$ is a character of $T = \mathbf{T}^F$, then $R_T^H(\theta)$ is the corresponding Deligne-Lusztig (virtual) character.  We need the following basic result regarding actions on characters of finite reductive groups obtained through twisted induction.

\begin{lemma} \label{DLlemma}
Let $\mathbf{H}$ and $H = \mathbf{H}^F$ be as above.  Let $\mathbf{L}$ be an $F$-stable Levi subgroup of $\mathbf{H}$, and write $L = \mathbf{L}^F$.  Let $\chi$ be a character of $L$, $\sigma \in \gal(\bar{\Q}/\Q)$, and $\alpha$ a linear character of $H$ which is trivial on unipotent elements.  Then
$$ \sigma R_{L}^H(\chi) = R_L^H(\sigma \chi) \quad \text{ and } \quad \alpha R_L^H(\chi) = R_L^H(\alpha \chi).$$
In particular, when $\mathbf{L} = \mathbf{T}$ is a maximal torus and $\chi = \theta$ is a character of $T = \mathbf{T}^F$, then we have
$$ \sigma R_{T}^H(\theta) = R_T^H(\sigma \theta) \quad \text{ and } \quad \alpha R_T^H(\theta) = R_T^H(\alpha \theta).$$
\end{lemma}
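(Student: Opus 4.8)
The plan is to reduce both assertions to the explicit character formula for Lusztig induction. Recall that the virtual character $R_L^H(\chi)$ is defined via the two-variable Green function / Deligne--Lusztig virtual module, and for a semisimple element $s \in H$ with $L$-part, its value at $g = su$ (the Jordan decomposition, $s$ semisimple, $u$ unipotent) is given by a sum
$$ R_L^H(\chi)(su) = \frac{1}{|C_L^\circ(s)^F|} \sum_{\{x \in H \,:\, s \in {}^x\mathbf{L}\}} Q^{{}^x\mathbf{L}}_{{}^x C^\circ}(u) \, {}^x\chi(s u'), $$
where $Q$ denotes a Green function (a virtual character of a finite reductive group evaluated at unipotent elements, hence rational-integer valued) and the inner term involves $\chi$ evaluated at the semisimple part conjugated appropriately. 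The two key features I will exploit are: (i) Green functions take values in $\mathbb{Z}$, so they are fixed by every $\sigma \in \gal(\bar\Q/\Q)$; and (ii) on the right-hand side the dependence on $\chi$ is \emph{only} through values $\chi(s')$ at semisimple elements $s'$, never at genuinely unipotent arguments.

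For the Galois statement, I would apply $\sigma$ to the character formula above. Since $\sigma$ fixes the rational constant $1/|C_L^\circ(s)^F|$ and each integer-valued Green function $Q$, it passes through the sum and lands only on the factor ${}^x\chi(su')$, turning it into ${}^x(\sigma\chi)(su')$ — which is exactly the formula computing $R_L^H(\sigma\chi)(su)$. Hence $\sigma R_L^H(\chi) = R_L^H(\sigma\chi)$ as class functions on $H$. Alternatively, and perhaps more cleanly, one can avoid the explicit formula: the Deligne--Lusztig virtual module is realized on $\ell$-adic cohomology of the variety $\mathbf{Y}_{\mathbf{U}}$ with coefficients twisted by $\chi$, and the Galois action on character values corresponds to the action on the coefficient system; commuting the Galois action with the Lefschetz trace formula gives the identity. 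I expect to cite the relevant compatibility (e.g. from Digne--Michel or Geck--Malle) rather than redo it.

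For the linear-character statement, let $\alpha$ be a linear character of $H$ trivial on unipotent elements. Then for $g = su$ with Jordan decomposition as above, $\alpha(g) = \alpha(s)\alpha(u) = \alpha(s)$, and moreover $\alpha$ restricted to $L$ is likewise trivial on unipotent elements of $L$, so $(\alpha\chi)(su') = \alpha(s)\chi(su')$ for the relevant semisimple-type arguments appearing in the formula. Because $s \in {}^x\mathbf{L}$ forces ${}^x$-conjugates to have the same $\alpha$-value (as $\alpha$ is a class function on $H$ and $s u'$ is $H$-conjugate to an element with semisimple part $H$-conjugate to $s$), the scalar $\alpha(s)$ factors out of the entire inner sum and out of $1/|C_L^\circ(s)^F|$, yielding $\alpha(s) R_L^H(\chi)(su) = R_L^H(\alpha\chi)(su)$; since $\alpha(g) = \alpha(s)$ this is $(\alpha R_L^H(\chi))(g) = R_L^H(\alpha\chi)(g)$. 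The ``in particular'' clause for $\mathbf{L} = \mathbf{T}$, $\chi = \theta$ is then immediate by specialization.

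The main obstacle, and the point I would be most careful about, is justifying that the $\chi$-dependence of the character formula for $R_L^H$ genuinely involves $\chi$ only at elements whose unipotent contribution has been stripped off into a Green function — i.e. pinning down the precise form of the Mackey-type character formula (as in Digne--Michel, Proposition~12.2, or Bonnafé's lecture notes) and checking the conjugation bookkeeping, so that ``$\alpha$ trivial on unipotents'' is exactly the hypothesis needed for $\alpha$ to pull out as the constant $\alpha(s)$. Once that formula is in hand, both identities are short. I would present the cohomological argument as the primary proof for the $\sigma$-equivariance (it is robust and functorial) and the character-formula argument for the $\alpha$-equivariance, since the latter hinges precisely on the unipotent-triviality hypothesis.
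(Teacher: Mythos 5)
Your proposal is correct and follows essentially the same route as the paper: the $\alpha$-identity via the Green-function character formula (Digne--Michel, Proposition 12.2), using that $\alpha$ is a class function killing unipotent parts, and the $\sigma$-identity via rationality of the relevant Lefschetz numbers (the paper uses the bilinear formula of Proposition 11.2 rather than your cohomological phrasing, but the content is the same). One small correction: in the character formula the argument of $\chi$ is $sv$ with $v$ unipotent, so your claim (ii) that $\chi$ is evaluated only at semisimple elements is not literally true --- but your actual argument uses only that $\alpha(sv)=\alpha(s)$, which is exactly what the hypothesis on $\alpha$ supplies, so nothing breaks.
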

\begin{proof} From \cite[Proposition 11.2]{dignemichel}, for any $g \in H$ we have
$$R_L^H(\chi)(g) = \frac{1}{|L|}\sum_{l \in L} \tr((g,l^{-1})|X) \chi(l),$$
where $\tr((g,l^{-1})|X)$ is the Lefschetz number corresponding to the $H \times L$-action on the $\ell$-adic cohomology $X$ of the relevant Deligne-Lusztig variety.  In particular, these numbers are rational integers (by \cite[Corollary 10.6]{dignemichel}, for example).  Thus,
$$\sigma R_L^H(\chi)(g) = \frac{1}{|L|}\sum_{l \in L} \tr((g,l^{-1})|X) \sigma\chi(l) = R_L^H(\sigma \chi).$$
Now let $g \in H$ have Jordan decomposition $g=su$, so that $s \in H$ is semisimple and $u \in H$ is unipotent, and we have $\alpha(g)=\alpha(s)$.  From \cite[Proposition 12.2]{dignemichel} we have 
\begin{equation} \label{DLGreen}
R_L^H(\chi)(g) = \frac{1}{|L||C_{\mathbf{H}}^{\circ}(s)^F|} \sum_{\{ h \in H \, \mid \, s \in h \mathbf{L} h^{-1} \}} |C_{h \mathbf{L}h^{-1}}^{\circ} (s)^F| \sum_{ v \in C_{h \mathbf{L}h^{-1}}^{\circ} (s)^F_{\mathrm{u}}} Q_{C_{h \mathbf{L}h^{-1}}^{\circ} (s)}^{C_{\mathbf{H}}^{\circ}(s)}(u, v^{-1})  \chi^h(sv),
\end{equation}
where $C_{\mathbf{H}}^{\circ}(s)$ denotes the connected component of the centralizer, and $Q_{C_{h\mathbf{L}h^{-1}}^{\circ}(s)}^{C_{\mathbf{H}}^{\circ}(s)}$ denotes the Green function.  Note that for any $h \in H$ from the first sum of \eqref{DLGreen}, and any unipotent $v$ from the second sum of \eqref{DLGreen}, we have 
$$\alpha(g) = \alpha(s) = \alpha(sv) = \alpha(h sv h^{-1}) =  \alpha^h(sv).$$
So, if we multiply \eqref{DLGreen} by $\alpha(g)$, we may pass this factor through the sums to obtain
$$ \alpha(g)  \chi^h(sv) = \alpha^h(sv)   \chi^h(sv)  = (\alpha \chi)^h(sv).$$
It follows that we have $\alpha(g) R_L^H(\chi)(g) = R_L^H(\alpha \chi)(g)$, as claimed.
\end{proof}

\subsection{Parametrization of Characters of $GL^\epsilon_n(q)$} \label{sec:Param}

We identify $GL_1(\bar{\F}_q)$ with $\bar{\F}_q^{\times}$, and so $F_{\epsilon}$ acts on $\bar{\F}_q^{\times}$ via $F_{\epsilon}(a)=a^{\epsilon q}$.  For any integer $k \geq 1$, we define $T_k$ to be the multiplicative subgroup of $\bar{\F}_q^{\times}$ fixed by $F_{\epsilon}^k$, that is
$$T_k = (\bar{\F}_q^{\times})^{F_{\epsilon}^k}.$$
We denote by $\wh{T}_k$ the multiplicative group of complex-valued linear characters of $T_k$.  Whenever $d|k$, we have the natural norm map $\nm_{k,d}=\nm$ from $T_k$ to $T_d$, and the transpose map $\wh{\nm}$ gives a norm map from $\wh{T}_d$ to $\wh{T}_k$, where $\wh{\nm}(\xi) = \xi \circ \nm$.  We consider the direct limit of the character groups $\wh{T}_k$ with respect to these norm maps, $\displaystyle \lim_{\longrightarrow} \wh{T_k}$,
on which $F_{\epsilon}$ acts through its natural action on the groups $T_k$.  Moreover, the fixed points of $\displaystyle \lim_{\longrightarrow} \wh{T_k}$ under $F_{\epsilon}^d$ can be identified with $\wh{T_d}$.  We let $\Theta$ denote the set of $F_{\epsilon}$-orbits of $\displaystyle \lim_{\longrightarrow} \wh{T_k}$.  The elements of $\Theta$ are sometimes called {\em simplices} (in \cite{Green, turull01} for example).  They are naturally dual objects to polynomials with roots given by an $F_{\epsilon}$-orbit of $\bar{\F}_q^{\times}$.

For any orbit $\phi \in \Theta$, let $|\phi|$ denote the size of the orbit.  Let $\cP$ denote the set of all partitions of non-negative integers, where we write $|\nu| = n$ if $\nu$ is a partition of $n$, and let $\cP_n$ denote the set of all partitions of $n$.  The irreducible characters of $\wt{G}=GL^{\epsilon}_n(q)$ are parameterized by partition-valued functions on $\Theta$.  Specifically, given a function $\lambda: \Theta \rightarrow \cP$, define $|\lambda|$ by
$$|\lambda| = \sum_{\phi \in \Theta} |\phi| |\lambda(\phi)|,$$
and define $\cF_n$ by
$$ \cF_n = \{ \lambda: \Theta \rightarrow \cP \, \mid \, |\lambda| = n\}.$$
Then $\cF_n$ gives a parametrization of the irreducible complex characters of $\wt{G}$.  Given $\lambda \in \cF_n$, we let $\wt{\chi}_{\lambda}$ denote the irreducible character corresponding to it.

We need several details regarding the structure of the character $\wt{\chi}_{\lambda}$.  In the case $\epsilon =1$, these facts all follow from the original work of Green \cite{Green}, and also appear from a slightly different point of view in the book of Macdonald \cite[Chapter IV]{MacBook}.  For the case $\epsilon=-1$, the facts we need appear in \cite{TV07}, which contains relevant results from \cite{dignemichelunitary, LuszSrin}.

First consider some $\lambda \in \cF_n$ such that $\lambda(\phi)$ is a nonempty partition for exactly one $\phi \in \Theta$, and write $\wt{\chi}_{\lambda}= \wt{\chi}_{\lambda(\phi)}$.  Suppose that $|\phi|=d$, so that $|\lambda(\phi)| = n/d$.  Then let $\omega^{\lambda(\phi)}$ be the irreducible character of the symmetric group $S_{n/d}$ parameterized by $\lambda(\phi) \in \cP_{n/d}$.  We fix this parametrization so that the partition $(1, 1, \ldots, 1)$ corresponds to the trivial character.  For any $\gamma =(\gamma_1, \gamma_2, \ldots, \gamma_{\ell}) \in \cP_{n/d}$, let $\omega^{\lambda(\phi)}(\gamma)$ denote the character $\omega^{\lambda(\phi)}$ evaluated at the conjugacy class parameterized by $\gamma$ (where $(1, 1, \ldots, 1)$ corresponds to the identity), and let $z_{\gamma}$ the size of the centralizer in $S_{n/d}$ of the class corresponding to $\gamma$.  Let $T_{\gamma}$ be the torus 
$$ T_{\gamma} = T_{d\gamma_1} \times T_{d \gamma_2} \times \cdots \times T_{d \gamma_{\ell}},$$
and let $\theta \in \phi$.  Then we have
\begin{equation} \label{DLLinComb}
\wt{\chi}_{\lambda(\phi)} = \pm \sum_{\gamma \in \mathcal{P}_{n/d}} \frac{ \omega^{\lambda(\phi)}(\gamma)}{z_{\gamma}} R_{T_{\gamma}}^{\wt{G}}(\theta),
\end{equation}
where the sign can be determined explicitly (see the Remark after \cite[Theorem 4.3]{TV07}, for example), but the sign will not have any impact for us.  Note that from \eqref{DLLinComb}, it follows from our parametrization of characters of the symmetric group and \cite[Proposition 12.13]{dignemichel} that the trivial character of $\wt{G}$ corresponds to $\lambda({\bf 1})=(1, 1, \ldots, 1)$.

For an arbitrary $\lambda \in \cF_n$, let $\phi_1, \phi_2, \ldots, \phi_r$ be precisely those elements in $\Theta$ such that $\lambda(\phi_i)$ is a nonempty partition, and let $d_i = |\phi_i|$.  Let $n_i = d_i |\lambda(\phi_i)|$, and define $L$ to be the Levi subgroup $L = GL_{n_1}^{\epsilon}(q) \times \cdots \times GL_{n_r}^{\epsilon}(q)$.  The character $\wt{\chi}_{\lambda}$ is then given by
\begin{equation} \label{Lusztigprod}
\wt{\chi}_{\lambda} = \pm R_{L}^{\wt{G}}\left(\wt{\chi}_{\lambda(\phi_1)} \times \cdots \times \wt{\chi}_{\lambda(\phi_r)}\right).
\end{equation}
The sign only appears in the $\epsilon = -1$ case, and again can be determined explicitly.  Note that \eqref{Lusztigprod} is Harish-Chandra induction in the case $\epsilon = 1$.

\subsection{Restriction to $SL^\epsilon_n(q)$ and Actions on the Parametrization} \label{sec:Restriction}

We now turn to the parametrization of the characters of $G = SL^{\epsilon}_n(q)$ in terms of the characters of $\wt{G}$ described in the previous section.  This is done for the case $\epsilon=1$ in \cite{karkargreen,Lehrer}, and we adapt the methods there to handle the more general case of $\epsilon = \pm 1$.

Consider any Galois automorphism $\sigma \in \gal(\bar{\Q}/\Q)$.  Through the natural action of $\sigma$ on the character values of any $\theta \in \wh{T}_d$, we have an action of $\sigma$ on the orbits $\phi \in \Theta$.  Given any $\lambda \in \cF_n$, we define $\sigma \lambda$ by 
$$ \sigma \lambda(\phi) = \lambda(\sigma \phi).$$
For $\alpha\in\wh{T}_1$, define $\alpha\wt{\chi}$ and $\alpha\theta$ by usual product of characters in $\irr(\wt{G})$ and $\wh{T}_d$, where we compose $\alpha$ with determinant and the norm maps, respectively.  Then $\alpha$ acts on the orbits $\phi \in \Theta$ as well, and we get an action of $\alpha$ on $\mathcal{F}_n$ by defining $\alpha\lambda$ as
$$\alpha\lambda(\phi)=\lambda(\alpha\phi).$$
We will need the following statements regarding these actions on the characters of $\wt{G}$.

\begin{lemma}\label{lem:actions}
Let $\lambda\in\mathcal{F}_n$.  For any $\sigma \in \gal(\bar{\Q}/\Q)$ and any $\alpha \in \wh{T}_1$, we have
$$ \sigma \wt{\chi}_{\lambda} = \wt{\chi}_{\sigma \lambda} \quad \text{ and } \quad \alpha \wt{\chi}_{\lambda} = \wt{\chi}_{\alpha \lambda}.$$
\end{lemma}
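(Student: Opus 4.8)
The plan is to read both identities off the explicit Deligne--Lusztig descriptions \eqref{DLLinComb} and \eqref{Lusztigprod} of $\wt{\chi}_{\lambda}$, combined with Lemma~\ref{DLlemma}. As a preliminary step I would check that $\sigma$ and $\alpha$ both act on $\Theta$ preserving the orbit sizes $|\phi|$, so that $\sigma\lambda$ and $\alpha\lambda$ again belong to $\cF_n$: for $\sigma$ this holds because its action on $\lim_{\longrightarrow}\wh{T_k}$ commutes with $F_{\epsilon}$ (on each cyclic group $\wh{T_k}$ the Galois action is ``raise to a fixed power'' while $F_{\epsilon}$ is ``multiply by $\epsilon q$'', and these commute), while for $\alpha$ it holds because the character $\alpha\circ\nm$ of $T_k$ is $F_{\epsilon}$-invariant ($\nm_{k,1}$ is $F_{\epsilon}$-equivariant and $F_{\epsilon}$ acts trivially on $T_1$), so that twisting by $\alpha$ merely translates each $F_{\epsilon}$-orbit by a fixed character. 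After this, the whole argument is a matter of applying Lemma~\ref{DLlemma} and matching the resulting parametrizing data against the definitions of the $\sigma$- and $\alpha$-actions on $\cF_n$.

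Next I would reduce to the case where $\lambda$ is supported on a single orbit. Write $\wt{\chi}_{\lambda}=\pm R_{L}^{\wt{G}}(\wt{\chi}_{\lambda(\phi_1)}\times\cdots\times\wt{\chi}_{\lambda(\phi_r)})$ as in \eqref{Lusztigprod}, with $L=GL_{n_1}^{\epsilon}(q)\times\cdots\times GL_{n_r}^{\epsilon}(q)=\mathbf{L}^{F_{\epsilon}}$. The sign is rational, hence fixed by $\sigma$ and harmless under multiplication by a scalar; the linear character $\alpha\circ\det$ of $\wt{G}$ is trivial on unipotent elements; and both $\sigma$ and multiplication by $\alpha\circ\det$ distribute over an external product of characters, the restriction of $\alpha\circ\det$ to the factor $GL_{n_i}^{\epsilon}(q)$ being again the corresponding $\alpha\circ\det$. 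Hence Lemma~\ref{DLlemma} lets me pass $\sigma$ and $\alpha\circ\det$ through $R_{L}^{\wt{G}}$ and onto each $\wt{\chi}_{\lambda(\phi_i)}$ separately. Since $\sigma$ (resp. $\alpha$) merely permutes the pairs $(|\phi_i|,|\lambda(\phi_i)|)$, the Levi attached to $\sigma\lambda$ (resp. $\alpha\lambda$) is $\wt{G}$-conjugate to $L$ up to a reordering of the factors, so it suffices to treat the single-orbit characters $\wt{\chi}_{\lambda(\phi)}$ as characters of $GL_{n/d}^{\epsilon}(q)$, where $d=|\phi|$.

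For a single orbit I would start from \eqref{DLLinComb}, $\wt{\chi}_{\lambda(\phi)}=\pm\sum_{\gamma\in\cP_{n/d}}\frac{\omega^{\lambda(\phi)}(\gamma)}{z_{\gamma}}R_{T_{\gamma}}^{\wt{G}}(\theta)$ with $\theta\in\phi$. The sign and the coefficients $\omega^{\lambda(\phi)}(\gamma)/z_{\gamma}$ are rational numbers, so Lemma~\ref{DLlemma} gives $\sigma\wt{\chi}_{\lambda(\phi)}=\pm\sum_{\gamma}\frac{\omega^{\lambda(\phi)}(\gamma)}{z_{\gamma}}R_{T_{\gamma}}^{\wt{G}}(\sigma\theta)$ and, likewise, $(\alpha\circ\det)\,\wt{\chi}_{\lambda(\phi)}=\pm\sum_{\gamma}\frac{\omega^{\lambda(\phi)}(\gamma)}{z_{\gamma}}R_{T_{\gamma}}^{\wt{G}}(\theta')$, where $\theta'$ is $\theta$ multiplied by the restriction of $\alpha\circ\det$ to $T_{\gamma}$. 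Since \eqref{DLLinComb} is valid for any $\theta$ in the orbit in question, the first of these is the character attached to the function in $\cF_n$ with value $\lambda(\phi)$ on the orbit of $\sigma\theta$ and empty elsewhere; comparing with the definition of the $\sigma$-action on $\Theta$ identifies it as $\wt{\chi}_{\sigma\lambda}$, which settles the first identity. For the second, the one remaining point --- and the only step I expect to be more than bookkeeping --- is the identification of $(\alpha\circ\det)|_{T_{\gamma}}\cdot\theta$ with the character denoted $\alpha\theta$ in the parametrization, i.e. the classical fact that $\det$ restricted to the cyclic factor $T_{d\gamma_j}$ of $T_{\gamma}$ inside $GL_{d\gamma_j}^{\epsilon}(q)$ is the norm map $\nm_{d\gamma_j,1}$ (here one uses the realization of $T_{d\gamma_j}$ through its action on $\F_{q^{d\gamma_j}}$, or its unitary analogue). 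Granting this, $\theta'$ represents the orbit $\alpha\phi$ and the same orbit-representative argument yields $(\alpha\circ\det)\wt{\chi}_{\lambda(\phi)}=\wt{\chi}_{\alpha\lambda}$, completing the proof.
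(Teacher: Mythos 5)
Your proposal is correct and follows essentially the same route as the paper's proof: reduce to the single-orbit case via \eqref{Lusztigprod}, then apply Lemma~\ref{DLlemma} to the expansion \eqref{DLLinComb}, using rationality of the coefficients and the fact that $\sigma\theta\in\sigma\phi$ and $\alpha\theta\in\alpha\phi$. The extra points you flag (that $\sigma\lambda,\alpha\lambda$ again lie in $\cF_n$, and that $\det$ restricted to $T_{d\gamma_j}$ is the norm map, so $(\alpha\circ\det)\theta$ matches the parametrization's $\alpha\theta$) are correct and are left implicit in the paper.
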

\begin{proof}  We proceed in a manner similar to the proof of \cite[Proposition of Section 3]{karkargreen}, which proves this statement in the $\epsilon=1$ case with the $\wh{T}_1$ action.  We begin by considering $\lambda \in \mathcal{F}_n$ such that $\lambda(\phi)$ is a nonempty partition for precisely one $\phi \in \Theta$, and so $\wt{\chi}_{\lambda}=\wt{\chi}_{\lambda(\phi)}$ is given by \eqref{DLLinComb}.  Given $\sigma \in \gal(\bar{\Q}/\Q)$, since each $\omega^{\lambda(\phi)}(\gamma)$ and each $z_{\gamma}$ is a rational integer, we have
$$\sigma \wt{\chi}_{\lambda(\phi)} = \pm \sum_{\gamma \in \mathcal{P}_{n/d}} \frac{ \omega^{\lambda(\phi)}(\gamma)}{z_{\gamma}} \sigma R_{T_{\gamma}}^{\wt{G}}(\theta) = \pm \sum_{\gamma \in \mathcal{P}_{n/d}} \frac{ \omega^{\lambda(\phi)}(\gamma)}{z_{\gamma}} R_{T_{\gamma}}^{\wt{G}}(\sigma\theta),$$
by Lemma \ref{DLlemma}.  Since $\sigma \theta \in \sigma \phi$, then we have $\sigma \wt{\chi}_{\lambda(\phi)} = \wt{\chi}_{\sigma \lambda(\phi)}$.  Similarly, if $\alpha \in \wh{T}_1$, we have by Lemma \ref{DLlemma}
$$\alpha \wt{\chi}_{\lambda(\phi)} = \pm \sum_{\gamma \in \mathcal{P}_{n/d}} \frac{ \omega^{\lambda(\phi)}(\gamma)}{z_{\gamma}} \alpha R_{T_{\gamma}}^{\wt{G}}(\theta) = \pm \sum_{\gamma \in \mathcal{P}_{n/d}} \frac{ \omega^{\lambda(\phi)}(\gamma)}{z_{\gamma}} R_{T_{\gamma}}^{\wt{G}}(\alpha\theta),$$
and since $\alpha \theta \in \alpha \phi$, we have $\alpha \wt{\chi}_{\lambda(\phi)} = \wt{\chi}_{\alpha \lambda(\phi)}$.

Now consider an arbitrary $\lambda \in \mathcal{F}_n$, with $\wt{\chi}_{\lambda}$ given by \eqref{Lusztigprod}.  By applying Lemma \ref{DLlemma}, along with the first case just proved, we have
\begin{align*}
\sigma \wt{\chi}_{\lambda} & = \pm R_L^{\wt{G}}\left(\sigma(\wt{\chi}_{\lambda(\phi_1)} \times \cdots \times \wt{\chi}_{\lambda(\phi_r)})\right) \\
& = \pm R_L^{\wt{G}}\left(\sigma\wt{\chi}_{\lambda(\phi_1)} \times \cdots \times \sigma\wt{\chi}_{\lambda(\phi_r)})\right) \\
& = \pm R_L^{\wt{G}}\left(\wt{\chi}_{\sigma\lambda(\phi_1)} \times \cdots \times \wt{\chi}_{\sigma\lambda(\phi_r)}\right) \\
& = \wt{\chi}_{\sigma \lambda}.
\end{align*}
Similarly, if we replace $\sigma$ with $\alpha \in \wh{T}_1$, we have $\alpha \wt{\chi}_{\lambda} = \wt{\chi}_{\alpha \lambda}$ as claimed.
\end{proof}

Note that we may identify $\wt{G}/G$ with $T_1$, and directly from Clifford theory we know every character $\chi$ of $G$ appears in some multiplicity-free restriction of a character $\wt{\chi}_{\lambda}$ of $\wt{G}$.  The restrictions of two different irreducible characters of $\wt{G}$ are either equal, or have no irreducible constituents of $\irr(G)$ in common.  With this, the next result is all that is needed to parameterize $\irr(G)$.  

\begin{lemma}\label{lem:irrparam}
Let $\lambda, \mu \in \cF_n$ with $\wt{\chi}_{\lambda}, \wt{\chi}_{\mu} \in \irr(\wt{G})$ the corresponding characters.  Then
$$\Res^{\wt{G}}_G (\wt{\chi}_{\lambda}) = \Res^{\wt{G}}_G (\wt{\chi}_{\mu})$$
if and only if there exists some $\alpha \in \wh{T}_1$ such that $\lambda = \alpha \mu$.
\end{lemma}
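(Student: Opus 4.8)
The plan is to reduce the equality of restrictions to a condition about the $\wh{T}_1$-action on the parametrizing functions, using two standard ingredients: first, that $\Res^{\wt G}_G(\wt\chi_\lambda)=\Res^{\wt G}_G(\wt\chi_\mu)$ holds if and only if $\wt\chi_\mu$ lies in the $\wt G$-orbit (equivalently the $\wh{T}_1$-orbit, since $\wt G/G\cong T_1$ acts on $\irr(\wt G)$ by tensoring with linear characters of $\wt G/G$, which are exactly the $\alpha\in\wh{T}_1$ precomposed with the determinant) of $\wt\chi_\lambda$; and second, Lemma \ref{lem:actions}, which identifies that action on characters with the combinatorial action $\mu\mapsto\alpha\mu$ on $\cF_n$.

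For the forward direction I would argue as follows. Suppose $\Res^{\wt G}_G(\wt\chi_\lambda)=\Res^{\wt G}_G(\wt\chi_\mu)$. Since $G\trianglelefteq\wt G$ with abelian quotient $\wt G/G\cong T_1$, Clifford theory and Gallagher's theorem give that the irreducible characters of $\wt G$ lying over a fixed $\psi\in\irr(G)$ are precisely the $\beta\otimes\wt\chi_\lambda$ as $\beta$ runs over (a coset-representative set for the stabilizer of $\psi$ in) $\Irr(\wt G/G)$; in any case, two irreducible characters of $\wt G$ have the same restriction to $G$ precisely when they differ by tensoring with some $\beta\in\Irr(\wt G/G)$. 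Identifying $\Irr(\wt G/G)$ with $\wh{T}_1$ via composition with the determinant — this is where the remark ``we may identify $\wt G/G$ with $T_1$'' is used — we get $\wt\chi_\mu=\alpha\,\wt\chi_\lambda$ for some $\alpha\in\wh{T}_1$. By Lemma \ref{lem:actions}, $\alpha\,\wt\chi_\lambda=\wt\chi_{\alpha\lambda}$, so $\wt\chi_\mu=\wt\chi_{\alpha\lambda}$, and since the parametrization $\lambda\mapsto\wt\chi_\lambda$ is a bijection $\cF_n\to\irr(\wt G)$ we conclude $\mu=\alpha\lambda$.

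For the reverse direction, assume $\lambda=\alpha\mu$ for some $\alpha\in\wh{T}_1$. Then by Lemma \ref{lem:actions}, $\wt\chi_\lambda=\wt\chi_{\alpha\mu}=\alpha\,\wt\chi_\mu$, i.e. $\wt\chi_\lambda=\beta\otimes\wt\chi_\mu$ where $\beta=\alpha\circ\det$ is a linear character of $\wt G$ trivial on $G$. Restricting to $G$ and using $\beta|_G=1_G$ gives $\Res^{\wt G}_G(\wt\chi_\lambda)=\Res^{\wt G}_G(\beta\otimes\wt\chi_\mu)=\Res^{\wt G}_G(\wt\chi_\mu)$, as desired.

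The main obstacle — really the only substantive point beyond bookkeeping — is the clean identification of the $\wt G/G$-action on $\irr(\wt G)$ with the $\wh{T}_1$-action: one needs that every linear character of $\wt G$ trivial on $G=[\wt G,\wt G]\cdot(\text{stuff})$, more precisely of $\wt G/G$, is of the form $\alpha\circ\det$ with $\alpha\in\wh{T}_1$, and that this is exactly the action referenced in the paragraph defining $\alpha\wt\chi$ before Lemma \ref{lem:actions}. Once that dictionary is in place, the ``if and only if'' is just Clifford theory for a normal subgroup with cyclic (indeed here cyclic of order $q-\epsilon$) quotient together with the bijectivity of the Green--Lusztig parametrization; I do not anticipate any genuine difficulty there, and the argument is symmetric in $\epsilon=\pm1$ since Lemma \ref{lem:actions} already covers both cases uniformly.
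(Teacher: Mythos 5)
Your proof is correct and follows essentially the same route as the paper: the paper's one-line proof invokes Lemma \ref{lem:actions} together with the cited result \cite[Theorem 1(i)]{karkargreen}, which is precisely the Clifford-theoretic fact you prove directly (equality of restrictions to $G$ is equivalent to the two characters differing by a linear character of $\wt{G}/G\cong T_1$). Spelling out that step via Gallagher and the cyclicity of $\wt{G}/G$, and then translating through $\alpha\wt{\chi}_\lambda=\wt{\chi}_{\alpha\lambda}$ and the bijectivity of $\lambda\mapsto\wt{\chi}_\lambda$, is exactly what the citation is standing in for.
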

\begin{proof}This follows directly from \cite[Theorem 1(i)]{karkargreen} and Lemma \ref{lem:actions}.
\end{proof}

Consider any irreducible character $\chi$ of $G$, so $\chi$ is a constituent of $\Res^{\wt{G}}_G(\wt{\chi}_{\lambda})$ for some $\lambda \in \cF_n$.  That is, $\chi\in\irr(G|\wt{\chi}_\lambda)$.  The other constituents of this restriction are $\wt{G}$-conjugates of $\chi$.  Note that the field of values of $\chi$ is invariant under conjugation by $\wt{G}$, and so in studying this field of character values it is not important which constituent we choose.  

Given $\lambda \in \cF_n$, we define the group $\mathcal{I}(\lambda)$ as
$$\mathcal{I}(\lambda)=\bigcap\{\ker\alpha \, \mid \, \alpha\in\wh{T}_1 \hbox{ such that } \alpha\lambda=\lambda\}.$$
We collect some basic properties of $\mathcal{I}(\lambda)$ in the following.

\begin{proposition}\label{prop:indstabdivides}
Let $\chi\in\irr(G|\wt{\chi}_\lambda)$.  Then: 
\begin{itemize}
\item The stabilizer in $\wt{G}$ of $\chi$ is the set of elements with determinant in $\mathcal{I}(\lambda)$.
\item The stabilizer of $\lambda$ in $\wh{T}_1$ is the set of elements whose kernel contains $\mathcal{I}(\lambda)$.  
\item The index $[T_1: \mathcal{I}(\lambda)]$ divides $\gcd(q-\epsilon,n)$.  
\end{itemize}
\end{proposition}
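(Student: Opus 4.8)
The plan is to treat the three bullets in order: the first follows from standard Clifford theory together with Lemmas~\ref{lem:actions} and~\ref{lem:irrparam}, the second is pure duality of finite abelian groups, and the third rests on a central-character computation. Throughout I identify $\wt G / G$ with $T_1$ via $\det$, so that the tensoring action $\alpha \cdot \wt\chi = (\alpha \circ \det)\,\wt\chi$ of $\wh T_1$ on $\irr(\wt G)$ corresponds, by Lemma~\ref{lem:actions}, to the action $\alpha \mapsto \alpha\lambda$ on $\cF_n$. In particular the stabilizer of $\wt\chi_\lambda$ in $\wh T_1$ is $A := \{\alpha \in \wh T_1 : \alpha\lambda = \lambda\}$, and by definition $\mathcal{I}(\lambda) = \bigcap_{\alpha \in A}\ker\alpha$ is the annihilator $A^{\perp}$ of $A$ in $T_1$, so that $[T_1 : \mathcal{I}(\lambda)] = |A|$.

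For the first bullet, recall that since $\wt G/G$ is cyclic, $\Res^{\wt G}_G \wt\chi_\lambda$ is multiplicity-free; hence its irreducible constituents form a single $\wt G$-orbit, of size $[\wt G : \wt G_\chi]$. A degree count shows this size is $|A|$: by Frobenius reciprocity each $\alpha\wt\chi_\lambda$ occurs in $\ind^{\wt G}_G\chi$, the $\alpha\wt\chi_\lambda$ being distinct as $\alpha$ runs over $\wh T_1/A$, and comparing $\deg\ind^{\wt G}_G\chi = (q-\epsilon)\,\chi(1)$ with $\sum_{\alpha \in \wh T_1/A}\deg(\alpha\wt\chi_\lambda) = \frac{q-\epsilon}{|A|}\wt\chi_\lambda(1)$ gives $\wt\chi_\lambda(1) = |A|\,\chi(1)$, so $\Res^{\wt G}_G\wt\chi_\lambda$ has $|A|$ constituents. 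Thus $G \le \wt G_\chi$ with $[\wt G : \wt G_\chi] = |A| = [T_1 : \mathcal{I}(\lambda)]$, so $\det(\wt G_\chi)$ is a subgroup of the cyclic group $T_1$ having the same order as $\mathcal{I}(\lambda)$, and therefore equals it. (Alternatively, one can obtain this directly by adapting \cite[Theorem~1]{karkargreen} via Lemma~\ref{lem:actions}, exactly as Lemma~\ref{lem:irrparam} was obtained.) The second bullet is then immediate: the set of $\alpha \in \wh T_1$ with $\mathcal{I}(\lambda) \subseteq \ker\alpha$ is the annihilator $(\mathcal{I}(\lambda))^{\perp} = (A^{\perp})^{\perp}$, which equals $A$ since $T_1$ is finite abelian, and $A$ is the stabilizer of $\lambda$ in $\wh T_1$.

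The third bullet is where the real content lies. We have $[T_1 : \mathcal{I}(\lambda)] = |A|$, which divides $|\wh T_1| = q - \epsilon$, so it remains to show $|A|$ divides $n$; as $\wh T_1$ is cyclic, $A$ is cyclic, and it suffices to show that every $\alpha \in A$ has order dividing $n$. The key point is that $Z(\wt G)$ consists of the scalar matrices, hence is isomorphic to $T_1$, and $\det$ restricts on $Z(\wt G)$ to the $n$-th power map $t \mapsto t^n$. Consequently, for any $\alpha \in \wh T_1$ the central character of $\alpha\wt\chi_\lambda = (\alpha \circ \det)\,\wt\chi_\lambda$ is $\alpha^n$ times the central character of $\wt\chi_\lambda$. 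If $\alpha \in A$, then $\alpha\wt\chi_\lambda = \wt\chi_\lambda$, and comparing central characters gives $\alpha^n = 1$. Hence $|A| \mid n$, and so $[T_1 : \mathcal{I}(\lambda)] \mid \gcd(q-\epsilon, n)$.

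I expect the main obstacle to be largely organizational: one must make sure $\Res^{\wt G}_G\wt\chi_\lambda$ is genuinely multiplicity-free so that the Clifford orbit count in the first bullet is legitimate, and keep the identifications $\wt G/G \cong T_1 \cong Z(\wt G)$ and the duality $\mathcal{I}(\lambda) = A^{\perp}$ consistent throughout. The only substantive input beyond this bookkeeping is the central-character observation used for the third bullet.
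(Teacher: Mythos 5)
Your proof is correct and follows essentially the route the paper intends: the paper's proof simply defers to Turull's Propositions 4.2--4.3 and Corollary 4.4, whose arguments are exactly the Clifford-theoretic orbit/degree count plus the duality $\mathcal{I}(\lambda)=A^{\perp}$ and a central-character (equivalently, $Z(\wt{G})G$-stabilization) computation, all enabled by Lemma~\ref{lem:actions} and Lemma~\ref{lem:irrparam} as you use them. The only difference is cosmetic: you phrase the divisibility by $n$ via $\alpha^n=1$ on central characters rather than via $[\wt{G}:Z(\wt{G})G]=\gcd(n,q-\epsilon)$, which are dual formulations of the same fact.
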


\begin{proof}
The proof is exactly as in \cite[Propositions 4.2 and 4.3 and Corollary 4.4]{turull01}, using Clifford theory and \prettyref{lem:actions}. 
\end{proof}

\subsection{Remarks on Generalized Gelfand-Graev Characters}\label{sec:GGGR}

We recall here some subgroups described in \cite[Section 2]{Geck04} used in the construction of the characters of generalized Gelfand-Graev representations (GGGRs). We introduce only the essentials for our purposes, and refer the reader to \cite{Geck04, kawanaka1985, Taylor16}, for example, for more details.

First, let $\bg{T}\leq \bg{B}=\bg{T}\bg{U}$ be an $F_\epsilon$-stable maximal torus and Borel subgroup, respectively, of $\bg{G}$, with unipotent radical $\bg{U}$.  Let  $\Phi$ be the root system of $\bf{G}$ with respect to $\bg{T}$ and $\Phi^+\subset \Phi$ the set of positive roots determine by $\bg{B}$. 
 
To each unipotent class $\mathcal{C}$ in $\bg{G}$ (or, equivalently, in $\wt{\bg{G}}$), there is associated a weighted Dynkin diagram $d\colon \Phi\rightarrow\Z $ and $F_\epsilon$-stable groups
\[\bg{U}_{d,i}:=\langle X_{\alpha}|\alpha\in\Phi^+, d(\alpha)\geq i\rangle \leq \bg{U},\]
where $X_\alpha$ denotes the root subgroup corresponding to $\alpha$.  In particular, $\bg{P}_d:=N_{\wt{\bg{G}}}(\bg{U}_{d,1})$ is an $F_\epsilon$-stable parabolic subgroup of $\wt{\bg{G}}$ and $\bg{U}_{d,i}\lhd \bg{P}_d$ for each $i=1,2,3,\ldots$.  We will further write $U_{d,i}:=\bg{U}_{d,i}^{F_\epsilon}$ and $P_d:=\bg{P}_d^{F_\epsilon}$.
 
Given $u\in\mathcal{C}\cap U_{d,2}$, the characters of GGGRs (which we will also refer to as GGGRs)  of $\wt{G}$, respectively $G$, are constructed by inducing certain linear characters $\varphi_u\colon U_{d,2}\rightarrow\C^\times$ to $\wt{G}$, resp. ${G}$.  In particular, the values of $\varphi_u$ are all $p$th roots of unity.  Strictly speaking, the GGGRs are actually rational multiples of the induced character:
\[\wt{\Gamma}_u=[U_{d,1}:U_{d,2}]^{-1/2}\ind_{U_{d,2}}^{\wt{G}}(\varphi_u)\quad\hbox{ and }\quad \Gamma_u=[U_{d,1}:U_{d,2}]^{-1/2}\ind_{U_{d,2}}^{G}(\varphi_u).\]
 
 The following is \cite[Proposition 10.11]{SFTaylorTypeA}, which is a consequence of  \cite[Theorem 1.8, Lemma 2.6, and Theorem 10.10]{tiepzalesski04}.

\begin{proposition}\label{prop:GGGRvalues}
Let $\Gamma_u$ be a GGGR of $G$. Then the following hold.
\begin{enumerate}
\item If $q$ is a square, $n$ is odd, or $n/(n,q-\epsilon)$ is even, then the values of $\Gamma_u$ are integers.
\item Otherwise, the values of $\Gamma_u$ lie in $\Q(\sqrt{\eta p})$, where $\eta\in\{\pm1\}$ is such that $p\equiv\eta\mod 4$. 
\end{enumerate}
 \end{proposition}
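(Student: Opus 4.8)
The statement is \cite[Proposition 10.11]{SFTaylorTypeA}, and the plan is to reconstruct it from the ingredients in \cite{tiepzalesski04}. First I would dispose of the normalizing factor: Kawanaka's construction equips the quotient $U_{d,1}/U_{d,2}$ with a nondegenerate alternating form, so $[U_{d,1}:U_{d,2}]$ is an even power of $q$ and $[U_{d,1}:U_{d,2}]^{-1/2}$ is a power of $q$, hence rational. Thus $\Q(\Gamma_u)$ is generated by the values of $\ind_{U_{d,2}}^G(\varphi_u)$, and since $\varphi_u$ takes values in the group $\mu_p$ of $p$-th roots of unity, each value
\[ \ind_{U_{d,2}}^G(\varphi_u)(g) = \frac{1}{|U_{d,2}|}\sum_{\substack{x\in G \\ x^{-1}gx\in U_{d,2}}} \varphi_u(x^{-1}gx) \]
is an algebraic integer in $\Z[\zeta_p]$; so $\Q(\Gamma_u)\subseteq\Q(\zeta_p)$, and it remains to determine the subgroup of $\gal(\Q(\zeta_p)/\Q)\cong(\Z/p\Z)^\times$ fixing $\Gamma_u$.

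For $a\in(\Z/p\Z)^\times$ write $\sigma_a$ for the automorphism $\zeta_p\mapsto\zeta_p^a$. Then $\sigma_a\ind_{U_{d,2}}^G(\varphi_u)=\ind_{U_{d,2}}^G(\sigma_a\circ\varphi_u)$, and from the explicit description of $\varphi_u$ (a fixed additive character of $\F_q$ composed with a pairing built from $u$) the character $\sigma_a\circ\varphi_u$ is $U_{d,2}$-conjugate to the character $\varphi_{u_a}$ attached to an element $u_a\in\mathcal{C}\cap U_{d,2}$ obtained from $u$ by rescaling with the parameter $a$. Since a GGGR depends only on the $G$-conjugacy class of the element defining it, $\sigma_a\Gamma_u=\Gamma_{u_a}$; and writing $u_a=h_a u h_a^{-1}$ with $h_a\in\wt G$ (possible because a geometric unipotent class of $\wt G$ is a single rational class), the element $u_a$ is $G$-conjugate to $u$ precisely when $\det(h_a)\in\det(C_{\wt G}(u))$, in which case $\sigma_a$ fixes $\Gamma_u$.

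Now one tracks the determinants. If $u$ has Jordan type $\lambda=(\lambda_1,\lambda_2,\ldots)$ one may take $h_a$ diagonal with $\det(h_a)=a^{c(\lambda)}$, where $c(\lambda)=\sum_i\binom{\lambda_i}{2}$, while $\det(C_{\wt G}(u))$ turns out to be the group of $g(\lambda)$-th powers in $\F_q^\times$, where $g(\lambda)$ is the gcd of $q-1$ with the distinct parts of $\lambda$ (in the unitary case $\F_q^\times$, $q-1$, and the role of $a$ are replaced by the corresponding norm-one data and $q+1$). Since $g(\lambda)$ divides every $\lambda_i$ it divides $2c(\lambda)$, so $\det(h_a)^2=a^{2c(\lambda)}$ is always a $g(\lambda)$-th power; hence $\sigma_{a^2}$ fixes $\Gamma_u$ for every $a$, so $\Gamma_u$ is fixed by all squares in $(\Z/p\Z)^\times$. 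Because $\Q(\sqrt{\eta p})$, with $p\equiv\eta\pmod 4$, is exactly the fixed field of those squares inside $\Q(\zeta_p)$ (the classical evaluation of the quadratic Gauss sum), this yields $\Q(\Gamma_u)\subseteq\Q(\sqrt{\eta p})$ in all cases, which is (2). For (1) one verifies that under each of the three hypotheses $a^{c(\lambda)}$ is already a $g(\lambda)$-th power in $\F_q^\times$ for all $a\in\F_p^\times$ and all partitions $\lambda$ of $n$ — an elementary but somewhat case-by-case arithmetic check, the least immediate instance being $u$ regular, where $c(\lambda)=n(n-1)/2$ and $g(\lambda)=(n,q-\epsilon)$ — so that $\Gamma_u$ is then fixed by all of $(\Z/p\Z)^\times$ and takes integer values.

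The main obstacle is the second step: matching the Galois action on $\varphi_u$ precisely with an honest element $u_a\in\mathcal{C}\cap U_{d,2}$, handling the auxiliary quadratic form in Kawanaka's construction (which is what forces only scalings by squares to be ``free''), and — especially in the unitary case, where everything must be done relative to the Hermitian form — controlling how the geometric class $\mathcal{C}$ decomposes into $G$-classes and how the rescaling permutes them. This is precisely the content of \cite[Theorem 1.8, Lemma 2.6, and Theorem 10.10]{tiepzalesski04}, so in the write-up one simply invokes those results via \cite[Proposition 10.11]{SFTaylorTypeA}.
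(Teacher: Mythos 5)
The paper does not actually prove this statement: Proposition \ref{prop:GGGRvalues} is quoted verbatim as \cite[Proposition 10.11]{SFTaylorTypeA}, itself recorded there as a consequence of \cite[Theorem 1.8, Lemma 2.6, Theorem 10.10]{tiepzalesski04}. Since your write-up ultimately ``simply invokes those results,'' your proposal lands in the same place as the paper, and as a proof-by-citation it is fine. Your preliminary reductions (rationality of the normalizing factor via the symplectic form on $U_{d,1}/U_{d,2}$, values in $\Z[\zeta_p]$, and the reduction to deciding which $\sigma_a$ fix $\Gamma_u$ via $\sigma_a\varphi_u=\varphi_{u_a}$ and the determinant of a conjugating element modulo $\det C_{\wt G}(u)$) are all sound and are indeed the skeleton of the cited arguments.

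The one place where your self-contained portion has a genuine gap is the determinant bookkeeping in the unitary case, which is the case this paper actually needs. Your formula $\det(h_a)=a^{c(\lambda)}$ with $c(\lambda)=\sum_i\binom{\lambda_i}{2}$, and the ensuing divisibility $g(\lambda)\mid 2c(\lambda)$, is a $GL_n(q)$ computation: the diagonal conjugating element $\mathrm{diag}(a^{k-1},\ldots,a,1)$ lies in $GL_k(q)$ but not in $GU_k(q)$. In the unitary group the conjugating element for a single Jordan block has diagonal $(\bar b^{\,k-1}\beta,\ldots,\bar b\beta,\beta)$ subject to $\bar b^{\,k-1}\beta^{q+1}=1$ (see Section \ref{sec:Unipotent} of the paper), so its determinant is $\bar b^{\binom{k-1}{2}}\beta^{k}$ with $\beta\notin T_1$ in general; whether its square lies in $\det C_{\wt G}(u)=\{\delta^{g(\lambda)}:\delta\in T_1\}$ is a compatibility condition between the exponents of $\bar b$ and of $\beta$, not an immediate consequence of $g(\lambda)\mid 2c(\lambda)$. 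This is exactly the delicate part that \cite{tiepzalesski04} carries out (and that Section \ref{sec:Unipotent} of this paper redoes in a refined form for its later arguments), so your parenthetical ``replace by the corresponding norm-one data'' is hiding real work rather than a routine substitution. The same caveat applies to your case-by-case check for part (1). If you intend the proof to rest on the citations, as the paper does, none of this matters; if you intend it to be self-contained, the unitary determinant analysis must be supplied.
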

 
\section{Initial Results on Fields of Values} \label{sec:Initial}

Keep the notation from above, so that $G=SL^\epsilon_n(q)$, $\wt{G}=GL^\epsilon_n(q)$, and the characters of $\wt{G}$ are denoted by $\wt{\chi}_\lambda$ for $\lambda\in\mathcal{F}_n$.  
For $\lambda\in\mathcal{F}_n$, let $\Q(\lambda)$ denote the field obtained from $\Q$ by adjoining the values of the characters in the orbits $\phi\in\Theta$ such that $\lambda(\phi)$ is nonempty. 

We define $\mathrm{Galg}(\lambda)$ and $\mathrm{Galr}(\lambda)$ as in \cite{turull01}.  That is, $\mathrm{Galg}(\lambda)$ is the stabilizer of $\lambda$ in $\gal(\Q(\lambda)/\Q)$ and 
\[\mathrm{Galr}(\lambda)=\{\sigma\in\gal(\Q(\lambda)/\Q)\, \mid \, \sigma\lambda=\alpha\lambda \hbox{ for some $\alpha\in\wh{T}_1$}\}.\]

\begin{theorem}
Let $\lambda\in\mathcal{F}_n$.  Then 
$\Q(\wt{\chi}_\lambda)=\Q(\lambda)^{\mathrm{Galg}(\lambda)}$ and $ \Q(\res^{\wt{G}}_G(\wt{\chi}_\lambda))=\Q(\lambda)^{\mathrm{Galr}(\lambda)}.$  That is, the field of values for $\wt{\chi}_\lambda$ and its restriction to $G$ are the fixed fields of $\mathrm{Galg}(\lambda)$ and $\mathrm{Galr}(\lambda)$, respectively.
\end{theorem}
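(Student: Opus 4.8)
The plan is to establish the two field identities separately, though both follow the same pattern using the Galois action on the parametrization described in Lemma~\ref{lem:actions}.

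\medskip

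\textbf{The field $\Q(\wt\chi_\lambda)$.} First I would observe that a character $\wt\chi_\lambda$ is fixed by $\sigma\in\gal(\bar\Q/\Q)$ if and only if $\sigma\wt\chi_\lambda=\wt\chi_\lambda$, which by Lemma~\ref{lem:actions} is equivalent to $\wt\chi_{\sigma\lambda}=\wt\chi_\lambda$, hence to $\sigma\lambda=\lambda$ since the parametrization $\lambda\mapsto\wt\chi_\lambda$ is a bijection. So the stabilizer of $\wt\chi_\lambda$ in $\gal(\bar\Q/\Q)$ is exactly the preimage of the stabilizer of $\lambda$. The key point I need is that the action of $\gal(\bar\Q/\Q)$ on $\lambda$ factors through $\gal(\Q(\lambda)/\Q)$: indeed $\sigma$ permutes the orbits $\phi\in\Theta$ according to its action on the character values of the $\theta\in\phi$, so $\sigma\lambda$ depends only on the restriction of $\sigma$ to $\Q(\lambda)$ (the field generated by those values). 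Therefore the stabilizer of $\wt\chi_\lambda$ in $\gal(\bar\Q/\Q)$ is the preimage of $\mathrm{Galg}(\lambda)\leq\gal(\Q(\lambda)/\Q)$ under restriction. On the other hand, $\Q(\wt\chi_\lambda)\subseteq\Q(\lambda)$ because formula~\eqref{DLLinComb} and formula~\eqref{Lusztigprod} express $\wt\chi_\lambda$ as a $\Q$-linear combination of Deligne-Lusztig characters $R^{\wt G}_{T_\gamma}(\theta)$ with $\theta\in\phi$ for the relevant $\phi$, and the values of $R^{\wt G}_{T_\gamma}(\theta)$ lie in $\Q(\theta)\subseteq\Q(\lambda)$ by formula~\eqref{DLGreen} (the Green functions and Lefschetz numbers being rational integers). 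Since $\Q(\wt\chi_\lambda)\subseteq\Q(\lambda)$ is a subfield and its fixing subgroup in $\gal(\Q(\lambda)/\Q)$ is precisely $\mathrm{Galg}(\lambda)$, the Galois correspondence gives $\Q(\wt\chi_\lambda)=\Q(\lambda)^{\mathrm{Galg}(\lambda)}$.

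\medskip

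\textbf{The field $\Q(\res^{\wt G}_G\wt\chi_\lambda)$.} For the restriction I would argue analogously. The character $\res^{\wt G}_G(\wt\chi_\lambda)$ is fixed by $\sigma$ if and only if $\sigma\res^{\wt G}_G(\wt\chi_\lambda)=\res^{\wt G}_G(\wt\chi_\lambda)$; since restriction commutes with the Galois action and by Lemma~\ref{lem:actions} $\sigma\wt\chi_\lambda=\wt\chi_{\sigma\lambda}$, this says $\res^{\wt G}_G(\wt\chi_{\sigma\lambda})=\res^{\wt G}_G(\wt\chi_\lambda)$. By Lemma~\ref{lem:irrparam} this holds exactly when $\sigma\lambda=\alpha\lambda$ for some $\alpha\in\wh T_1$, i.e.\ exactly when the image of $\sigma$ in $\gal(\Q(\lambda)/\Q)$ lies in $\mathrm{Galr}(\lambda)$. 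As before $\Q(\res^{\wt G}_G\wt\chi_\lambda)\subseteq\Q(\wt\chi_\lambda)\subseteq\Q(\lambda)$, so the fixing subgroup in $\gal(\Q(\lambda)/\Q)$ of this subfield is $\mathrm{Galr}(\lambda)$, and the Galois correspondence yields $\Q(\res^{\wt G}_G\wt\chi_\lambda)=\Q(\lambda)^{\mathrm{Galr}(\lambda)}$. One should also check that $\mathrm{Galr}(\lambda)$ is genuinely a subgroup of $\gal(\Q(\lambda)/\Q)$, which is immediate: if $\sigma\lambda=\alpha\lambda$ and $\tau\lambda=\beta\lambda$ then $(\sigma\tau)\lambda=\sigma(\beta\lambda)=(\sigma\beta)(\sigma\lambda)=(\sigma\beta)\alpha\lambda$, and $\sigma\beta\in\wh T_1$ since $\wh T_1$ is Galois-stable and $\wh T_1$ is abelian.

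\medskip

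\textbf{Main obstacle.} The substantive point, rather than a routine one, is the containment $\Q(\wt\chi_\lambda)\subseteq\Q(\lambda)$ together with the claim that the $\gal(\bar\Q/\Q)$-action on $\lambda$ factors through $\gal(\Q(\lambda)/\Q)$ in a way compatible with the action on $\wt\chi_\lambda$; this is what lets the Galois correspondence be applied inside the finite extension $\Q(\lambda)/\Q$. This requires being careful that $\Q(\lambda)$ is exactly the field generated by the values of the $\theta$ lying in the orbits $\phi$ on which $\lambda$ is nonempty and that every such value actually occurs among the values of $\wt\chi_\lambda$ (so that neither field is accidentally larger than claimed). Once the inclusion and the compatibility of actions are in hand, both identities are formal consequences of the fundamental theorem of Galois theory applied to the subfields $\Q(\wt\chi_\lambda)$ and $\Q(\res^{\wt G}_G\wt\chi_\lambda)$ of $\Q(\lambda)$.
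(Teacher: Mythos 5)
Your proposal is correct and takes essentially the same route as the paper: the paper's proof just invokes Lemma~\ref{lem:actions} and defers to Turull's Propositions 2.8 and 3.4, whose content is precisely the Galois-correspondence argument you spell out (with Lemma~\ref{lem:irrparam} characterizing when two restrictions agree, and the rationality of the coefficients in \eqref{DLLinComb}, \eqref{Lusztigprod} giving $\Q(\wt{\chi}_\lambda)\subseteq\Q(\lambda)$).
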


\begin{proof}
Given \prettyref{lem:actions}, the proof is  exactly the same as that of \cite[Propositions 2.8, 3.4]{turull01}.
\end{proof}

 Note that since the members of $\phi\in\Theta$ are characters of $T_d$ for some $d$, it follows that $\Q(\lambda)=\Q(\zeta_m)$ is the field obtained from $\Q$ by adjoining some primitive $m$th root of unity $\zeta_m$, where $\gcd(m,p)=1$.
 
\begin{remark}\label{rem:sigmainv}
We further remark that, as in the proof of \cite[Proposition 6.2]{turull01}, the Galois automorphism $\sigma_{-1}\colon\Q(\zeta_m)\rightarrow\Q(\zeta_m)$ satisfying $\sigma_{-1}(\zeta_m)=\zeta_m^{-1}$ induces complex conjugation on $\Q(\lambda)$. Hence $\wt{\chi}_\lambda$, respectively $\res^{\wt{G}}_G(\wt{\chi}_\lambda)$, is real-valued if and only if $\sigma_{-1}\in \mathrm{Galg}(\lambda)$, respectively $\sigma_{-1}\in\mathrm{Galr}(\lambda)$.
\end{remark}

For the remainder of the article,  we let $\F_\lambda$ denote the field of values of  $\res^{\wt{G}}_G(\wt{\chi}_\lambda)$.  That is, $\F_\lambda$ is the fixed field of $\mathrm{Galr}(\lambda)$.  Since $\F_\lambda\subseteq\Q(\lambda)=\Q(\zeta_m)$  and $\gcd(m,p)=1$, we have $\F_\lambda\cap \Q(\zeta_p)=\Q$ for any primitive $p$th root of unity $\zeta_p$.

 \begin{proposition}\label{prop:initialcase}
 Let $\chi\in\irr(G|\wt{\chi}_\lambda)$.  Keep the notation above.  Then 
 \begin{enumerate}
 \item If $q$ is square, $n$ is odd, or $n/(n,q-\epsilon)$ is even, then $\Q(\chi)=\F_\lambda$.
 \item Otherwise, $\F_\lambda\subseteq\Q(\chi)\subseteq \F_\lambda(\sqrt{\eta p})$, where $\eta\in\{\pm1\}$ is such that $p\equiv\eta\mod 4$.
 \end{enumerate}
 In particular, $\chi$ is real-valued if and only if $\res^{\wt{G}}_G(\wt{\chi}_\lambda)$ is, except possibly when $q\equiv 3\mod4$ and $2\leq n_2\leq (q-\epsilon)_2$.
 \end{proposition}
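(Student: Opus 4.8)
The plan is to leverage the fact that $\chi$ is a constituent of the restriction of $\wt{\chi}_\lambda$, together with the structure of the relevant character values as exposed in Proposition~\ref{prop:GGGRvalues}. First I would recall that $\chi$ and its $\wt{G}$-conjugates are precisely the constituents of $\res^{\wt{G}}_G(\wt{\chi}_\lambda)$, and that this restriction is multiplicity-free; hence $\sum_{\chi'\in\irr(G|\wt{\chi}_\lambda)}\chi' = \res^{\wt{G}}_G(\wt{\chi}_\lambda)$, whose field of values is by definition $\F_\lambda$. This gives immediately the containment $\F_\lambda \subseteq \Q(\chi)$ in both cases, since any $\sigma$ fixing all $\chi'$ fixes the sum, and conversely: more precisely, $\sigma\in\gal(\bar\Q/\Q)$ fixing $\Q(\chi)$ permutes the constituents (as $\sigma\chi$ is again a constituent, being $\wt{G}$-conjugate considerations aside — here one uses that $\sigma$ permutes $\irr(G|\wt{\chi}_\lambda)$ because it fixes $\res^{\wt G}_G(\wt\chi_\lambda)$ up to the ambient action), so in fact the subtle direction is the reverse inclusion $\Q(\chi)\subseteq\F_\lambda$ (case 1) or $\Q(\chi)\subseteq\F_\lambda(\sqrt{\eta p})$ (case 2).

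For the upper bound on $\Q(\chi)$, the key input is a formula expressing $\chi$ (or at least pinning down its field of values) in terms of a GGGR $\Gamma_u$ of $G$ together with characters already known to have values in $\F_\lambda$. The standard device is that an appropriate degenerate or generalized Gelfand--Graev character $\Gamma_u$ of $\wt G$ restricts to (a sum involving) $\Gamma_u$ for $G$, and that $\langle \res^{\wt G}_G(\wt\chi_\lambda),\Gamma_u\rangle$ can be arranged to be $1$ for a suitable choice of $u$ attached to $\lambda$; then $\chi$ appears with multiplicity one in $\Gamma_u$, and one can solve for $\chi$ as an integral (or $\F_\lambda$-rational) linear combination of $\Gamma_u$ and other constituents lying in $\F_\lambda$. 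Granting such a setup — which is exactly the apparatus Turull uses in \cite[Section 4]{turull01} for $SL_n(q)$, and which the later sections of this paper develop for $SU_n(q)$ — we get $\Q(\chi)\subseteq \F_\lambda(\Gamma_u)$. Now apply Proposition~\ref{prop:GGGRvalues}: in case~1 the values of $\Gamma_u$ are rational integers, so $\F_\lambda(\Gamma_u)=\F_\lambda$ and $\Q(\chi)=\F_\lambda$; in case~2 the values of $\Gamma_u$ lie in $\Q(\sqrt{\eta p})$, so $\Q(\chi)\subseteq\F_\lambda(\sqrt{\eta p})$.

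For the final "real-valued" assertion: by Remark~\ref{rem:sigmainv}, $\res^{\wt G}_G(\wt\chi_\lambda)$ is real-valued iff $\sigma_{-1}\in\mathrm{Galr}(\lambda)$, i.e. iff $\F_\lambda\subseteq\R$. In case~1, $\Q(\chi)=\F_\lambda$, so $\chi$ is real iff $\F_\lambda$ is. In case~2, $\chi$ is real iff $\Q(\chi)\subseteq\R$; since $\F_\lambda\subseteq\Q(\chi)\subseteq\F_\lambda(\sqrt{\eta p})$, if $\F_\lambda\not\subseteq\R$ then $\Q(\chi)\not\subseteq\R$ either, so the only way the equivalence can fail is when $\F_\lambda\subseteq\R$ but $\Q(\chi)=\F_\lambda(\sqrt{\eta p})\not\subseteq\R$, which forces $\eta=-1$, i.e. $p\equiv 3\bmod 4$, i.e. $q\equiv 3\bmod 4$ (recalling $q$ is not a square here); and the hypothesis of case~2 not holding in case~1 means precisely $q$ nonsquare, $n$ even, and $n/(n,q-\epsilon)$ odd, which translates to $2\le n_2\le (q-\epsilon)_2$. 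So outside this exceptional range the equivalence holds. The main obstacle is establishing the multiplicity-one statement $\langle\res^{\wt G}_G(\wt\chi_\lambda),\Gamma_u\rangle=1$ for a well-chosen $u$ depending on $\lambda$ — this is where the explicit unipotent-element analysis of Section~\ref{sec:Unipotent} and the GGGR computations of Section~\ref{sec:MoreGGGR} are needed, and for the present proposition one can likely cite those results once available, or else invoke the already-known $SL_n(q)$ analogue and the fact that the GGGR machinery is uniform in $\epsilon$.
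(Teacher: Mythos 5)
Your proposal follows essentially the same route as the paper: establish $\F_\lambda\subseteq\Q(\chi)$ from the fact that the restriction is the sum of $\wt{G}$-conjugates of $\chi$ (all sharing the same field of values), then use a GGGR containing $\wt{\chi}_\lambda$ with multiplicity one together with Proposition~\ref{prop:GGGRvalues} to bound $\Q(\chi)$ above, and the final reality statement falls out exactly as you describe. The precise mechanism is slightly cleaner than ``solving for $\chi$'': for $\sigma$ fixing $\F_\lambda$ (resp.\ $\F_\lambda(\sqrt{\eta p})$), one has $\sigma\Gamma_u=\Gamma_u$ and $\sigma$ fixes $\res^{\wt G}_G(\wt\chi_\lambda)$, so $\sigma\chi_0$ is again the unique constituent of $\irr(G|\wt\chi_\lambda)$ occurring in $\Gamma_u$ with multiplicity one, forcing $\sigma\chi_0=\chi_0$; one then passes from $\chi_0$ to $\chi$ using invariance of the field of values under $\wt G$-conjugation.

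The one point you have wrong is where the multiplicity-one input comes from. You flag as the ``main obstacle'' the existence of a GGGR $\wt\Gamma_u$ with $\langle\wt\Gamma_u,\wt\chi_\lambda\rangle_{\wt G}=1$ and suggest it requires the unipotent analysis of Sections~\ref{sec:Unipotent}--\ref{sec:MoreGGGR}. It does not: this is Kawanaka's theorem (\cite[3.2.18]{kawanaka1985}, \cite[15.7]{Taylor16}), valid for any irreducible character of $\wt G$, and the paper simply cites it. Frobenius reciprocity then immediately produces the unique $\chi_0\in\irr(G|\wt\chi_\lambda)$ with $\langle\Gamma_u,\chi_0\rangle_G=1$; no choice of $u$ ``adapted to $\lambda$'' needs to be constructed here. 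The explicit unipotent and conjugacy computations of the later sections are needed only for the sharper dichotomy of Theorem~\ref{thm:turullext1} (deciding, in case~2, whether $\Q(\chi)$ equals $\F_\lambda$ or $\F_\lambda(\sqrt{\eta p})$), not for the present proposition.
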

 
 \begin{proof}

 Write $\F:=\F_\lambda$ and $\wt{\chi}:=\wt{\chi}_\lambda$.  First, we remark that certainly $\F\subseteq \Q(\chi)$, by its definition, since $\Res_G^{\widetilde{G}}(\widetilde{\chi})$ is the sum of $\wt{G}$-conjugates of $\chi$.  
  
Let $\wt{\Gamma}$ be a GGGR of $\wt{G}$ such that such that $\langle \widetilde{\Gamma}, \widetilde{\chi}\rangle_{\widetilde{G}} = 1$, which exists by a well-known result of Kawanaka (see \cite[3.2.18]{kawanaka1985} or \cite[15.7]{Taylor16}). Further, there exists a GGGR, $\Gamma$, of $G$ such that $\widetilde{\Gamma} = \ind_G^{\widetilde{G}}(\Gamma)$. Then Frobenius reciprocity yields that there is a unique irreducible constituent $\chi_0\in\irr(G|\wt{\chi})$ satisfying $\langle \Gamma, \chi_0 \rangle_G = 1$.  Without loss, we may assume $\chi$ is this $\chi_0$, as the field of values is invariant under $\wt{G}$-conjugation.

Write $\K=\F(\sqrt{\eta p})$.  Let $\sigma\in\gal(\overline{\Q}/\F)$ in case (1), and let $\sigma\in\gal(\overline{\Q}/\K)$ in case (2).  Then by \prettyref{prop:GGGRvalues}, $\sigma\chi$ is also a constituent of $\Gamma = \sigma\Gamma$ occurring with multiplicity 1. However, as $\Res_G^{\wt{G}}(\wt{\chi})$ is invariant under $\sigma$, we have $\sigma\chi$ is also a constituent of the restriction $\Res_G^{\wt{G}}(\wt{\chi})$.  Hence we see that $\sigma\chi=\chi$, by uniqueness, and hence $\Q(\chi)\subseteq \F$ in case (1) and $\Q(\chi)\subseteq \K$ in case (2).  
\end{proof}
 
In our main results below, characters of $T_1$ of $2$-power order will play an important role.  In particular, we denote by $\mathrm{sgn}$  the unique member of $\wh{T}_1$ of order $2$.
 
 \begin{lemma}\label{lem:orbiteven}
Let $\chi\in\irr(G|\wt{\chi}_\lambda)$ and write $I:=\stab_{\wt{G}}(\chi)$.  Then $[\wt{G}:I]$ is even if and only if $\mathrm{sgn}\lambda=\lambda$.
\end{lemma}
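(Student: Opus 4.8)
The plan is to translate the index $[\wt{G}:I]$ into the order of a subgroup of the cyclic group $\wh{T}_1$ and then use the elementary fact that a finite cyclic group has even order exactly when it contains the (unique) element of order $2$.

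First I would use that the determinant map induces an isomorphism $\wt{G}/G\cong T_1$. By the first bullet of \prettyref{prop:indstabdivides}, $I=\stab_{\wt{G}}(\chi)$ is the full preimage under $\det$ of $\mathcal{I}(\lambda)\leq T_1$; in particular $I\supseteq G$ and $[\wt{G}:I]=[T_1:\mathcal{I}(\lambda)]$. Next I would recall that $\wh{T}_1$ is the Pontryagin dual of the finite abelian group $T_1$, and that, by the second bullet of \prettyref{prop:indstabdivides}, the stabilizer $S:=\stab_{\wh{T}_1}(\lambda)$ of $\lambda$ under the action $\alpha\lambda(\phi)=\lambda(\alpha\phi)$ equals the annihilator $\{\alpha\in\wh{T}_1 \mid \mathcal{I}(\lambda)\subseteq\ker\alpha\}$ of $\mathcal{I}(\lambda)$. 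Since the annihilator of a subgroup $B\leq T_1$ inside $\wh{T}_1$ is canonically isomorphic to the dual of $T_1/B$, this gives $|S|=[T_1:\mathcal{I}(\lambda)]=[\wt{G}:I]$. Moreover, by the very definition of the $\wh{T}_1$-action on $\mathcal{F}_n$, the condition $\mathrm{sgn}\,\lambda=\lambda$ is equivalent to $\mathrm{sgn}\in S$.

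To finish, I would note that $T_1=(\bar{\F}_q^\times)^{F_\epsilon}$ is a finite subgroup of $\bar{\F}_q^\times$, hence cyclic, so $\wh{T}_1$ and the subgroup $S$ are cyclic as well. If $q$ is even then $|T_1|=q-\epsilon$ is odd, so $[\wt{G}:I]$ is odd and the statement is vacuous (there is no such $\mathrm{sgn}$); so assume $q$ is odd, whence $|\wh{T}_1|$ is even and $\mathrm{sgn}$ is its unique element of order $2$. A subgroup of a finite cyclic group of even order has even order if and only if it contains the unique subgroup of order $2$, i.e.\ if and only if it contains $\mathrm{sgn}$. Applying this to $S\leq\wh{T}_1$ together with $[\wt{G}:I]=|S|$ yields that $[\wt{G}:I]$ is even iff $\mathrm{sgn}\in S$ iff $\mathrm{sgn}\,\lambda=\lambda$, as claimed. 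The only point requiring care is the bookkeeping that identifies $[\wt{G}:I]$ with $|S|$ via \prettyref{prop:indstabdivides} and Pontryagin duality; the cyclic-group observation is routine, so I do not anticipate a genuine obstacle.
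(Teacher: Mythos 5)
Your proof is correct and is essentially the paper's argument viewed from the dual side: the paper observes that $[\wt{G}:I]$ is even iff $\mathcal{I}(\lambda)$ lies in the unique index-$2$ subgroup of $T_1$, which is $\ker(\mathrm{sgn})$, and then invokes the second bullet of \prettyref{prop:indstabdivides}, while you equivalently identify $[\wt{G}:I]$ with the order of the annihilator $S=\stab_{\wh{T}_1}(\lambda)$ and test whether $\mathrm{sgn}\in S$. Both hinge on \prettyref{prop:indstabdivides} and the cyclicity of $T_1$, so there is no substantive difference.
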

\begin{proof}
Note that $2$ divides $[\wt{G}:I]$ if and only if $[I:G]_2\leq \frac{1}{2}(q-\epsilon)_2$, if and only if $\mathcal{I}(\lambda)$ is contained in the unique subgroup of $\wt{G}/G$ of order $\frac{1}{2}(q-\epsilon)$.  But notice that this is exactly the kernel of $\mathrm{sgn}$ as an element of $\wh{T_1}$.
\end{proof}

 \begin{lemma}\label{lem:sgn}
Let $\chi\in\irr(G|\wt{\chi}_\lambda)$. If $\mathrm{sgn}\lambda\neq\lambda$, then $\F_\lambda=\Q(\chi)$.
 \end{lemma}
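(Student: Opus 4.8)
The plan is to reduce at once to the remaining case of Proposition~\ref{prop:initialcase} and then run a parity argument on the set of irreducible constituents of $\res^{\wt{G}}_G(\wt{\chi}_\lambda)$. Since $\F_\lambda$ is by definition the field of values of $\res^{\wt{G}}_G(\wt{\chi}_\lambda)$, and this restriction is a sum of $\wt{G}$-conjugates of $\chi$, the containment $\F_\lambda \subseteq \Q(\chi)$ holds unconditionally. By Proposition~\ref{prop:initialcase}, if $q$ is a square, $n$ is odd, or $n/(n,q-\epsilon)$ is even, then already $\Q(\chi) = \F_\lambda$ and there is nothing to prove; so I may assume we are in the remaining case, where $\F_\lambda \subseteq \Q(\chi) \subseteq \F_\lambda(\sqrt{\eta p})$. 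It then suffices to rule out $\Q(\chi) = \F_\lambda(\sqrt{\eta p})$. Assume for contradiction that $\Q(\chi) \neq \F_\lambda$; since $[\F_\lambda(\sqrt{\eta p}):\F_\lambda] \leq 2$, this forces $\Q(\chi) = \F_\lambda(\sqrt{\eta p})$, a degree-$2$ extension of $\F_\lambda$.

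Write $S = \irr(G|\wt{\chi}_\lambda) = \{\chi_1, \dots, \chi_k\}$ for the set of irreducible constituents of $\res^{\wt{G}}_G(\wt{\chi}_\lambda)$, with $\chi = \chi_1$. By Clifford theory $\wt{G}$ acts transitively on $S$, so $k = [\wt{G}:\stab_{\wt{G}}(\chi)]$, and since $\mathrm{sgn}\lambda \neq \lambda$, Lemma~\ref{lem:orbiteven} tells us that $k$ is \emph{odd}. The members of $S$ are all $\wt{G}$-conjugate, hence share a common field of values, namely $\Q(\chi) = \F_\lambda(\sqrt{\eta p})$; consequently every element of $\gal(\bar{\Q}/\F_\lambda(\sqrt{\eta p}))$ fixes each $\chi_i$. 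On the other hand $\gal(\bar{\Q}/\F_\lambda)$ permutes $S$, because it fixes the sum $\res^{\wt{G}}_G(\wt{\chi}_\lambda)$. Combining these two facts, the action of $\gal(\bar{\Q}/\F_\lambda)$ on $S$ factors through the order-$2$ quotient $\gal(\F_\lambda(\sqrt{\eta p})/\F_\lambda)$.

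Finally, choose $\sigma \in \gal(\bar{\Q}/\F_\lambda)$ with $\sigma\chi \neq \chi$, which exists because $\Q(\chi) \neq \F_\lambda$. The image of $\sigma$ in $\gal(\F_\lambda(\sqrt{\eta p})/\F_\lambda)$ must be nontrivial, so $\sigma$ acts on $S$ as a permutation $\tau$ with $\tau^2 = \mathrm{id}$ that moves $\chi$. Since $|S| = k$ is odd, $\tau$ fixes some $\chi_j$, i.e.\ $\sigma\chi_j = \chi_j$; hence $\sigma$ fixes $\Q(\chi_j)$ pointwise, and because $\Q(\chi_j) = \Q(\chi)$ we conclude $\sigma\chi = \chi$, a contradiction. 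Therefore $\Q(\chi) = \F_\lambda$. I expect no real obstacle here; the one point to handle with a little care is the claim that $\gal(\bar{\Q}/\F_\lambda)$ acts on $S$ through a group of order dividing $2$, which is precisely where Proposition~\ref{prop:initialcase} (bounding $[\Q(\chi):\F_\lambda]$) and the common-field-of-values observation enter---without that bound one could not invoke that an involution of an odd set has a fixed point.
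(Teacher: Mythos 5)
Your proof is correct and is essentially the paper's argument: both rest on Proposition~\ref{prop:initialcase} to bound $[\Q(\chi):\F_\lambda]$ by $2$, on the fact that $\wt{G}$-conjugate constituents share a field of values, and on the parity statement of Lemma~\ref{lem:orbiteven}. The paper phrases it contrapositively (a fixed-point-free involution on the orbit forces even orbit size, hence $\mathrm{sgn}\lambda=\lambda$), while you run the equivalent contradiction (an involution on an odd orbit has a fixed point); the content is the same.
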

 \begin{proof}
Write $\F:=\F_\lambda$ and recall that $\F\subseteq\Q(\chi)\subseteq \F(\sqrt{\eta p})$.  Let $\K=\F(\sqrt{\eta p})$, so that $\K$ is a quadratic extension of $\F$.  Let $\tau$ be the generator of $\gal(\K/\F)$ and write $I$ for the stabilizer of $\chi$ under $\wt{G}$.  Then note that $\tau^2$ necessarily fixes $\chi$, and by definition $\tau$ fixes $\Res^{\wt{G}}_G(\wt{\chi})$, which by Clifford theory is the sum of the $[\wt{G}:I]$ conjugates of $\chi$ under the action of $\wt{G}$.  
 
We prove the contrapositive. Suppose $\F\neq\Q(\chi)$, so that $\tau$ does not fix $\chi$.  Then since the field of values is invariant under $\wt{G}$-conjugation, it follows that the orbit of $\chi$ under $\wt{G}$ can be partitioned into pairs conjugate to $\{\chi, \tau\chi\}$.  Hence the size of the orbit, $[\wt{G}:I]$, must be even, so $\mathrm{sgn}\lambda=\lambda$ by \prettyref{lem:orbiteven}.  
 \end{proof}
 
 \section{Unipotent Elements}  \label{sec:Unipotent}

To deal with the remaining cases (in particular, when
$q\equiv \eta\pmod 4$ is nonsquare, $\eta\in\{\pm1\}$, $\epsilon=-1$, and $2\leq n_2\leq (q+1)_2$), we will continue to employ the use of GGGRs.  For this, we will need to analyze certain aspects of conjugacy of unipotent elements.  Here the authors' observations in \cite{SFVinroot} on this subject will be useful.

In particular, if a unipotent element of $\wt{G}$ has $m_k$ Jordan blocks of size $k$ (that is, $m_k$ elementary divisors of the form $(t-1)^k$), then we may find a conjugate in $\wt{G}$ of the form $\bigoplus_k \wt{J}_k^{m_k}$, where the sum is over only those $k$ such that $m_k \neq 0$ and each $\wt{J}_k\in GL^\epsilon_k(q)$.  The following lemma, which is \cite[Lemma 3.2]{SFVinroot}, will be useful throughout the section.

\begin{lemma}\label{lem:SFV3.2}
Let $u$ be a unipotent element in $\wt{G}$ with $m_k$ Jordan blocks of size $k$ for each $1\leq k\leq n$.  For each $k$ such that $m_k\neq 0$, let $\delta_k\in T_1$ be arbitrary.  Then there exists some $g \in C_{\wt{G}}(u)$ such that $\mathrm{det}(g) = \prod_k \delta_k^k$.
\end{lemma}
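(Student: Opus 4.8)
The statement to prove is Lemma \ref{lem:SFV3.2}: given a unipotent $u \in \wt{G} = GL_n^\epsilon(q)$ with $m_k$ Jordan blocks of size $k$, and arbitrary elements $\delta_k \in T_1$ for each $k$ with $m_k \neq 0$, there exists $g \in C_{\wt{G}}(u)$ with $\det(g) = \prod_k \delta_k^k$. Since this is quoted verbatim as \cite[Lemma 3.2]{SFVinroot}, one could simply cite it; but if one wants to include a proof, the natural approach is to work block by block using the known structure of centralizers of unipotent elements in $GL_n^\epsilon(q)$.

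The first step is to reduce to the case of a single Jordan block size. After conjugating, write $u = \bigoplus_k \wt{J}_k^{m_k}$ as in the paragraph preceding the lemma, where $\wt{J}_k \in GL_k^\epsilon(q)$ is a single Jordan block. The centralizer $C_{\wt{G}}(u)$ respects this decomposition: it contains $\prod_k C_{GL_{km_k}^\epsilon(q)}(\wt{J}_k^{m_k})$ as a subgroup, and the determinant of an element in the factor indexed by $k$ can be computed there. So it suffices to show: for each fixed $k$ with $m_k \neq 0$ and each $\delta_k \in T_1$, there exists $g_k \in C_{GL_{km_k}^\epsilon(q)}(\wt{J}_k^{m_k})$ with $\det(g_k) = \delta_k^k$. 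Then $g = \bigoplus_k g_k$ has the desired determinant.

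The second step is the single-block-type computation. The centralizer of a unipotent element with all Jordan blocks of the same size $k$ (with multiplicity $m = m_k$) is a well-understood group: it is (isomorphic to) a group of $m \times m$ invertible matrices over the ring $\F_{q^{?}}[\F]$... more precisely, in the $GL$ case $C_{GL_{km}(q)}(J_k^m) \cong GL_m(q[\varepsilon]/\varepsilon^k)$, whose determinant map (composing with the projection $q[\varepsilon]/\varepsilon^k \to q$) surjects onto $\F_q^\times$, and the "$k$-th power of $\delta_k$" appears because of how scalar matrices $\delta_k I_m$ interact — actually the cleanest route is: take $g_k$ to be (a conjugate of) the block-diagonal matrix that acts on a chosen block by a single matrix of determinant $\delta_k$ placed so that its contribution to the total determinant is $\delta_k^k$, exploiting that a single Jordan block $\wt{J}_k \in GL_k^\epsilon(q)$ has a centralizer containing scalars, and more usefully that one can find an element of $C_{GL_k^\epsilon(q)}(\wt{J}_k)$ of determinant $\delta_k^{k}$ for any $\delta_k$: indeed $C_{GL_k^\epsilon(q)}(\wt{J}_k)$ consists of polynomials in $\wt{J}_k$ that are invertible, an abelian group whose determinant map hits exactly $(\F_q^\times)^{?}$ — here one uses that the regular unipotent centralizer in $GL_k^\epsilon(q)$ has determinant image equal to the set of $k$-th... hmm. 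The precise bookkeeping of which power of $\delta_k$ shows up is exactly the content one should take from \cite{SFVinroot}; I would cite \cite[Lemma 3.2]{SFVinroot} directly rather than reconstruct it.

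**The main obstacle.** The only subtle point is pinning down the exponent $k$ in $\det(g) = \prod_k \delta_k^k$: one must track exactly how a chosen local modification of $g$ on the size-$k$ part contributes to the global determinant, and verify that every element of $T_1$ arises as the relevant "$\delta_k$" (i.e. the map $\delta_k \mapsto \delta_k^k$ is the right one and the construction is unconstrained in $\delta_k \in T_1$). Everything else — the block reduction and the fact that centralizers of unipotents decompose according to Jordan block sizes — is standard linear algebra over finite fields. Given that the result is already published as \cite[Lemma 3.2]{SFVinroot}, the cleanest presentation is simply to invoke it with a one-line pointer to that reference, as the excerpt does.
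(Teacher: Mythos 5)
The paper gives no proof of this lemma at all---it is quoted verbatim from \cite[Lemma 3.2]{SFVinroot}---so your recommendation to simply cite that reference matches the paper exactly, and your block-by-block reduction is also the correct underlying argument. The one point where you hesitate ("the determinant map hits exactly\ldots hmm") resolves immediately: you do not need to determine the full image of the determinant on the regular-unipotent centralizer, since the scalar matrix $\delta_k I_k$ placed on a single Jordan block of size $k$ (and the identity on all other blocks) already centralizes $u$, lies in $GU_n(q)$ when $\delta_k^{q+1}=1$, and contributes exactly $\delta_k^k$ to the determinant.
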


 Let $\zeta_p$ be a primitive $p$th root of unity in $\C$.  In what follows, we let $b$ be a fixed integer such that $\mathrm{Gal}(\Q(\zeta_p)/\Q)$ is generated by the map $\tau\colon \zeta_p\mapsto \zeta_p^b$.  Note that $(b,p)=1$ and $b$ has multiplicative order $p-1$ modulo $p$.  Further, note that $\tau$ also induces the map $\sqrt{\eta p}\mapsto-\sqrt{\eta p}$ generating $\mathrm{Gal}(\Q(\sqrt{\eta p})/\Q)$.  Let $\bar{b}$ denote the image of $b$ under a fixed isomorphism $(\Z/p\Z)^\times \rightarrow \F_p^\times$, so that $\bar{b}$ generates $\F_{p}^\times$.
 
 Note that by \cite[Theorem 1.9]{tiepzalesski04}, every unipotent element $u$ of $GU_n(q)$ is conjugate to $u^b$ in $C_{\widetilde{G}}(s)$, where $s$ is a semisimple element in $C_{\wt{G}}(u)$.   We are interested in making precise statements about such a conjugating element. 
 
 To begin, let $u$ be a regular unipotent element of $GU_n(q)$, identified as in \cite[Lemma 5.1]{SFVinroot}.  Arguing as there, we see that an element conjugating $u$ to $u^b$ must have diagonal \[(\bar{b}^{n-1}\beta, \bar{b}^{n-2}\beta,...,\bar{b}\beta,\beta),\] where $\beta\in\F_{q^2}^\times$ and $\bar{b}^{n-1}\beta^{q+1}=1$.  
Note that the determinant of such an element is $\bar{b}^{\binom{n-1}{2}}\beta^n$ and that the condition that $\bar{b}^{n-1}\beta^{q+1}=1$ yields that $\beta^{q+1}$ is a $(p-1)$-root of unity.  
 
 \begin{lemma}\label{lem:regunip}\label{lem:beta2}
Let  $q\equiv \eta\mod 4$ be nonsquare with $\eta\in\{\pm1\}$ and let
  $u$ be a regular unipotent element of $GU_n(q)$. Keep the notation above.   Then
  \begin{enumerate}[label=(\alph*)]
  \item If $n$ is even, then $\beta_2$ is a primitive $(q^2-1)_2$-root of unity in $\F_{q^2}^\times$.
  \item There is an element $x$ in $GU_n(q)$ such that $u^x=u^b$ and $|\det(x)|$ is a $2$-power.  
  \item If $n\not\equiv 0 \pmod 4$, there is an element $x$ in $GU_n(q)$ such that $u^x=u^b$ and $|\det(x)| = (q+1)_2$.
 \end{enumerate}
 \end{lemma}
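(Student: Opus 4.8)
The plan is to dispatch (a) by a short direct computation and to reduce (b) and (c) to locating $\det(x_{0})$ modulo $n$-th powers inside the cyclic group $T_{1}$ (which, since $\epsilon=-1$ here, is $\wt{G}/G$, cyclic of order $q+1$). Throughout write $M:=(q-1)/(p-1)$; because $q$ is nonsquare, $q=p^{f}$ with $f$ odd and $M=1+p+\cdots+p^{f-1}$ is odd, a fact that will be used repeatedly.

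\smallskip
\noindent\emph{Part (a).} From $\bar b^{n-1}\beta^{q+1}=1$ we get $\beta^{q+1}=\bar b^{1-n}$, hence $\beta^{(q^{2}-1)/2}=(\beta^{q+1})^{(q-1)/2}=\bar b^{(1-n)(q-1)/2}$. Since $(q-1)/2=\frac{p-1}{2}M$ with $M$ odd and $\bar b$ generates $\F_{p}^{\times}$, we have $\bar b^{(q-1)/2}=(\bar b^{(p-1)/2})^{M}=(-1)^{M}=-1$. With $n$ even the exponent $1-n$ is odd, so $\beta^{(q^{2}-1)/2}=-1\neq 1$; that is, $\beta$ is a nonsquare in $\F_{q^{2}}^{\times}$. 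As $q^{2}-1$ is even, this is equivalent to $\beta_{2}$ having order $(q^{2}-1)_{2}$, i.e.\ to $\beta_{2}$ being a primitive $(q^{2}-1)_{2}$-th root of unity, which is (a).

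\smallskip
\noindent\emph{Setup for (b) and (c).} By \cite[Theorem 1.9]{tiepzalesski04} some $x_{0}\in GU_{n}(q)$ satisfies $u^{x_{0}}=u^{b}$; since $(b,p)=1$ forces $\langle u^{b}\rangle=\langle u\rangle$ and hence $C_{\wt{G}}(u^{b})=C_{\wt{G}}(u)$, every conjugator is $x_{0}c$ with $c\in C_{GU_{n}(q)}(u)$. Applying \prettyref{lem:SFV3.2} to the single Jordan block of $u$ shows $\det\bigl(C_{GU_{n}(q)}(u)\bigr)\supseteq T_{1}^{n}:=\{t^{n}\mid t\in T_{1}\}$, so the realizable determinants contain the whole coset $\det(x_{0})T_{1}^{n}$, and it suffices to exhibit a suitable element there. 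One then computes $\det(x_{0})$ modulo $T_{1}^{n}$: fixing a generator $h$ of $\F_{q^{2}}^{\times}$, so that $g:=h^{q-1}$ generates $T_{1}$ and $\bar b=h^{(q+1)Mw}$ for some $w$ coprime to $p-1$ (hence $w$ odd), the diagonal form of $x_{0}$ recorded above together with $\beta^{q+1}=\bar b^{1-n}$ and the identity $(q+1)\binom{n}{2}+n(1-n)=\binom{n}{2}(q-1)$ give $\det(x_{0})\in g^{Mw\binom{n}{2}}T_{1}^{n}$.

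\smallskip
\noindent\emph{Finishing (b) and (c).} Write $T_{1}=A\times D$ with $A$ the Sylow $2$-subgroup (so $|A|=(q+1)_{2}\ge 2$, as $q$ is odd) and $D$ its $2$-complement; then $T_{1}^{n}=A^{n_{2}}\times D^{n_{2'}}$, and an element of $T_{1}$ has $2$-power order exactly when it lies in $A$. Put $d:=\gcd\bigl((q+1)_{2'},n_{2'}\bigr)$; one checks $A\cdot T_{1}^{n}=A\times D^{n_{2'}}=\langle g^{d}\rangle$ and $d\mid\binom{n}{2}$ (for an odd prime $r$ dividing both $n$ and $q+1$, $v_{r}(\binom{n}{2})=v_{r}(n)\ge v_{r}(d)$), so $\det(x_{0})\in\langle g^{d}\rangle$ and in particular the $2'$-part of $\det(x_{0})$ already lies in $D^{n_{2'}}$; picking $\det(c)\in T_{1}^{n}$ with $2'$-part inverse to it then gives $\det(x_{0}c)\in A$, proving (b). For (c) one further notes that within $\det(x_{0})T_{1}^{n}$ the $A$-part varies over $\bigl(\det(x_{0})\bigr)_{2}\cdot A^{n_{2}}$, independently of the $2'$-part: if $n$ is odd then $A^{n_{2}}=A$, so this coset is all of $A$; if $n\equiv 2\pmod 4$ then $\binom{n}{2}$ is odd, hence $Mw\binom{n}{2}$ is odd (recall $M,w$ odd), so $\bigl(\det(x_{0})\bigr)_{2}$ has order $|A|$ and the coset equals the set of generators of $A$. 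Either way one may choose $c$ so that $\det(x_{0}c)$ has a generator of $A$ as its $A$-part and trivial $2'$-part, giving $|\det(x_{0}c)|=(q+1)_{2}$. Since $n\not\equiv 0\pmod 4$ means $n_{2}\in\{1,2\}$, this proves (c).

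\smallskip
\noindent\emph{Expected main obstacle.} The step I expect to be the real work is the determinant computation in the setup: pinning $\det(x_{0})$ down inside the cyclic group $T_{1}$ (rather than merely in $\F_{q^{2}}^{\times}$) and reading off its class modulo $T_{1}^{n}$ requires careful bookkeeping of how $\bar b$, $\beta$, and a chosen generator of $T_{1}$ are related, and it is precisely there that the nonsquare hypothesis on $q$ enters --- through $M$ being odd --- exactly as it does in \prettyref{prop:GGGRvalues}.
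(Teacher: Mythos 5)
Your proof is correct, and for parts (b) and (c) it is organized genuinely differently from the paper's. The paper splits into cases ($n$ odd; $n\equiv 2\pmod 4$ with $\eta=1$; $n\equiv 2\pmod 4$ with $\eta=-1$), in each case pinning down the $2$-part of the order of $\det(x)$ by comparing the orders of $\bar b$ and $\beta_2$ via part (a), and then kills the $2'$-part by a separate argument: since $\gcd(p-1,q+1)$ divides $2$, the $(p-1)$-root-of-unity contribution to the determinant is automatically of $2$-power order, and the remaining odd part of $\beta^n$ is cancelled using \prettyref{lem:SFV3.2}. You replace all of this with one discrete-logarithm computation, $\det(x_0)\equiv g^{Mw\binom{n}{2}}\pmod{T_1^n}$ with $Mw$ odd, and then read (b) and (c) off from the position of this coset inside the cyclic group $T_1=A\times D$; this is more uniform (no case split on $\eta$), makes the role of the nonsquare hypothesis fully visible through the oddness of $M$, and treats the $2$-part and $2'$-part simultaneously, at the cost of having to fix a generator $h$ of $\F_{q^2}^\times$ and express $\bar b$ and $\beta$ in terms of it. Your exponent $\binom{n}{2}$ on $\bar b$ is the one consistent with the stated diagonal $(\bar b^{\,n-1}\beta,\dots,\bar b\beta,\beta)$ (the paper's preamble writes $\binom{n-1}{2}$, which appears to be a slip), and I checked the identity $(q+1)\binom{n}{2}+n(1-n)=\binom{n}{2}(q-1)$ and the divisibility $d\mid\binom{n}{2}$; both hold. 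Your part (a), via $\beta^{(q^2-1)/2}=-1$, is likewise a different but correct derivation of the same fact.
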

 \begin{proof}
 
For part (a) note that  $|\bar{b}^{n-1}|$ has the same $2$-part as $|\bar{b}|$ since $n$ is even, so $|\beta_2^{q+1}|=(p-1)_2=(q-1)_2$ since $q$ is nonsquare.  Hence the multiplicative order of $\beta_2$ is $2(q-\eta)_2=(q^2-1)_2$.
 
To prove the rest, we begin by showing that if $n\not\equiv 0\pmod4$, then we may find an $x$ in $GU_n(q)$ such that $u^x=u^b$ and $|\det(x)|_2=(q+1)_2$. If $n\equiv 2\pmod 4$, then  $\beta_2$ is a primitive $(q^2-1)_2$-root of unity by (a), and hence $\beta_2^n$ is a $(q-\eta)_2$ root of unity.  If $\eta=-1$, then since $|\bar{b}|_2=2,$ we see $\bar{b}^{n-1\choose 2}$ has odd order.  Then any $x\in GU_n(q)$ satisfying $u^x=u^b$ must satisfy $|\det(x)|_2 = (q+1)_2$ in this case. If $\eta=1$, note that $(q+1)_2=2$, so we must just show that $\det(x)$ has even order. Here $|\bar{b}|_2=(q-1)_2,$ and $\bar{b_2}^{\binom{n-1}{2}}$ has order strictly smaller than $\bar{b}_2$.  Hence $\bar{b}^{\binom{n-1}{2}}\cdot\beta^n$ has even order, so $|\det(x)|_2 = (q+1)_2$ again in this case.
 
Now assume $n$ is odd and let $\wt{x}$ be an element in $GU_n(q)$ satisfying $u^{\wt{x}}=u^b$.  Then certainly $\det(\wt{x})\in {T}_1$, so we may use \prettyref{lem:SFV3.2} to replace $\wt{x}$ with some $x\in GU_n(q)$ satisfying $\det(x)=\det(\wt{x})\cdot \delta^n$ for any $\delta\in {T}_1$.  In particular, note that $|\delta^n|=|\delta|$ for any $(q+1)_2$-root of unity $\delta$, since $n$ is odd.  Then we may choose $\delta$ so that $\det(\wt{x})_2\delta^n$ is a primitive $(q+1)_2$-root of unity, yielding  $|\det(x)|_2 = (q+1)_2$.

It remains to show that in all cases, $x$ can be chosen such that $|\det(x)|_{2'}=1$.  Since $\beta^{q+1}$ is a $(p-1)$-root of unity, we may decompose the determinant of $x$ into $\beta_2^n\cdot \beta_{(q+1)_{2'}}^n\cdot y$, where $y$ is a $(p-1)$-root of unity.  However, we also know that the determinant is a $(q+1)$-root and an odd prime cannot divide both $p-1$ and $q+1$.  Hence $y$ must be a $2$-power root of unity, and we may replace $x$ with an element of determinant $\beta_2^n\cdot y$, using \prettyref{lem:SFV3.2}.
 \end{proof}
 
 We remark that arguing similarly, we see that in fact there is no $x$ satisfying the conclusion of \prettyref{lem:regunip}(c) if $u$ is a regular unipotent element when $4$ divides $n$.  We can, however, generalize to the following statement about more general unipotent elements when $4\not|n$.
 
 \begin{corollary}\label{cor:unipconj}
Let $q\equiv \eta\mod 4$ be nonsquare with $\eta\in\{\pm1\}$.  If $u$ is a unipotent element of $GU_n(q)$ satisfying at least one of the following:
\begin{enumerate} 
\item $u$ has an odd number of elementary divisors of the form $(t-1)^k$ with $k\equiv 2\mod 4$;
\item $u$ has an elementary divisor of the form $(t-1)^k$ with $k$ odd,
\end{enumerate}
then $u$ is conjugate to $u^b$ by an element $x$ satisfying $|\det(x)| = (q+1)_2$.  

In particular, if $n$ is not divisible by $4$, any unipotent element is conjugate to $u^b$ by an element $x$ satisfying $|\det(x)| = (q+1)_2$.
 \end{corollary}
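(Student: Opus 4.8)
The plan is to reduce the statement to the regular‑unipotent case, which is the content of Lemma~\ref{lem:regunip} and the remark following it, by passing to the Jordan decomposition of $u$ and then using Lemma~\ref{lem:SFV3.2} to adjust the determinant of the conjugating element.

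Write $\wt{G}=GU_n(q)$. After replacing $u$ by a conjugate we may assume $u=\bigoplus_k \wt{J}_k^{m_k}$, the sum over those $k$ with $m_k\neq 0$, with each $\wt{J}_k\in GU_k(q)$ a regular unipotent element. Since $(b,p)=1$ and $u$ has $p$-power order, $u^b$ generates $\langle u\rangle$, so $C_{\wt{G}}(u^b)=C_{\wt{G}}(u)$. For each relevant $k$, $\wt{J}_k$ and $\wt{J}_k^b$ are conjugate regular unipotent elements of $GU_k(q)$, and Lemma~\ref{lem:regunip}(b) provides $x_k\in GU_k(q)$ with $\wt{J}_k^{x_k}=\wt{J}_k^b$ and $|\det(x_k)|$ a $2$-power; moreover Lemma~\ref{lem:regunip}(c) lets us take $|\det(x_k)|=(q+1)_2$ whenever $k\not\equiv 0\pmod 4$. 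Block-summing yields $x_0:=\bigoplus_k x_k^{m_k}\in GU_n(q)$ with $u^{x_0}=u^b$ and $\det(x_0)=\prod_k\det(x_k)^{m_k}\in T_1$ of $2$-power order. By Lemma~\ref{lem:SFV3.2}, for any $\delta_k\in T_1$ there is $g\in C_{\wt{G}}(u)=C_{\wt{G}}(u^b)$ with $\det(g)=\prod_k\delta_k^k$, and then $x:=x_0g$ again satisfies $u^x=u^b$. Since $T_1$ is cyclic, the determinants obtainable this way include all of the coset $\det(x_0)\cdot T_1^{g_0}$, where $g_0=\gcd\{k:m_k\neq 0\}$; writing $T_1=A\times B$ with $A$ the Sylow $2$-subgroup (of order $(q+1)_2$) and $B$ the odd part, and noting $\det(x_0)\in A$, it suffices to show that $\det(x_0)\cdot A^{g_0}$ contains a generator of $A$.

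I would then split on which hypothesis holds. If (2) holds, some odd $k$ has $m_k\neq 0$, so $g_0$ is odd, $A^{g_0}=A$, and $\det(x_0)\cdot A^{g_0}=A$ contains a generator; choosing the $\delta_k$ so that $\det(x_0)\prod_k\delta_k^k$ has order exactly $(q+1)_2$ then finishes this case. If (1) holds but (2) fails, every $k$ with $m_k\neq 0$ is even, and since $N:=\sum_{k\equiv 2\,(4)}m_k$ is odd there is some $k\equiv 2\pmod 4$ with $m_k\neq 0$; hence $v_2(g_0)=1$, so $A^{g_0}=A^2$ is the unique index-$2$ subgroup and one must show $\det(x_0)\notin A^2$. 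For this I use the $x_k$ already chosen: for $k\equiv 2\pmod 4$ the normalization $|\det(x_k)|=(q+1)_2$ makes $\det(x_k)$ a generator of $A$, hence nontrivial in $A/A^2\cong\Z/2$; for $4\mid k$, the remark after Lemma~\ref{lem:regunip} says no conjugating element in $GU_k(q)$ has determinant of order $(q+1)_2$, so $|\det(x_k)|<(q+1)_2$ and $\det(x_k)\in A^2$ is trivial there. Therefore $\det(x_0)$ maps to $N\bmod 2=1$ in $A/A^2$, so $\det(x_0)\notin A^2$, and here $x=x_0$ already works. Finally, the ``in particular'' clause follows from a short parity count: if $u$ satisfies neither (1) nor (2), all its blocks have even size, the size-$k$ blocks with $k\equiv 2\pmod 4$ contribute $\equiv 2N\equiv 0\pmod 4$ to $n$ and those with $4\mid k$ contribute $\equiv 0\pmod 4$, forcing $4\mid n$.

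The main obstacle I anticipate is the case-(1) bookkeeping: checking $v_2(g_0)=1$ and, crucially, that the blocks with $4\mid k$ contribute trivially modulo $A^2$, which is exactly where the remark after Lemma~\ref{lem:regunip} is needed. Everything else is a routine assembly of the block decomposition, the identity $C_{\wt{G}}(u^b)=C_{\wt{G}}(u)$, and Lemma~\ref{lem:SFV3.2}.
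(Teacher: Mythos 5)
Your proof is correct and follows essentially the same route as the paper: decompose $u$ into regular unipotent blocks, apply Lemma~\ref{lem:regunip} blockwise, and use Lemma~\ref{lem:SFV3.2} to adjust the determinant of the conjugating element. Your explicit $A/A^2$ parity computation in the case where (1) holds and (2) fails just spells out the bookkeeping the paper leaves implicit, and your closing parity count for the ``in particular'' clause matches the paper's.
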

 \begin{proof}
 Indeed, viewing $u$ as $\bigoplus_k \wt{J}_k^{m_k}$ as in \cite[Section 3.2]{SFVinroot}, we may find elements $x_k$ for each $1\leq k\leq n$ as in \prettyref{lem:regunip} conjugating each $\tilde{J}_k$ to $\tilde{J}_k^b$. In case (1), we see that the product $\bigoplus_k{x_k}^{m_k}$ will satisfy the statement, after possibly again using \prettyref{lem:SFV3.2} to replace $x_k$ for any odd $k$ with an element satisfying $|\det(x_k)|=1$.
 
 If (2) holds, but (1) does not hold, $y=\bigoplus_{2|k} {x_k}^{m_k}$ will satisfy $|\det(y)|=|\det(y)|_2 < (q+1)_2$.  We may use \prettyref{lem:regunip} to obtain $x_k$ for some $k$ odd  such that $|\det(x_k)| = (q+1)_2$, and replace the remaining $x_k$ for odd $k$ with an element satisfying $|\det(x_k)|=1$.  The resulting $\bigoplus_k{x_k}^{m_k}$ will satisfy the statement.
 
 The last statement follows, since if $n$ is odd, we must be in case (2), and if $n\equiv2\pmod4$, we must be in case (1) or (2).
\end{proof}
 
 \begin{remark}\label{rem:condsnmod4}
 We remark that at least one of conditions 1 and 2 of \prettyref{cor:unipconj} must occur if $n\equiv 2\pmod 4$, and that condition 1 implies condition 2 if $n\equiv 0\mod 4$. Further, when $\eta=1$, the condition $n_2\leq (q+1)_2$ induced from \prettyref{prop:initialcase} yields that $n\equiv2\pmod 4$.  
 
 \end{remark}
 
 We now address the case that $4$ divides $n$, $q\equiv 3\pmod 4$, and that neither of the conditions in \prettyref{cor:unipconj} occur.
 
 \begin{lemma}\label{lem:unipconj0mod4}
Let $q\equiv 3\mod 4$ and let $n\equiv 0\pmod 4$ such that $n_2\leq (q+1)_2$.  Let $u$ be a unipotent element of $GU_n(q)$ with no elementary divisors $(t-1)^k$ with $k$ odd.  Then $u$ is conjugate to $u^b$ by an element $x$ satisfying $|\det(x)| = \frac{(q^2-1)_2}{n_2}$.  \end{lemma}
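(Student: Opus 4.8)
The plan is to follow the strategy of \prettyref{cor:unipconj}: reduce block by block to the regular unipotent case treated in \prettyref{lem:regunip}, and then correct the determinant of the assembled element using \prettyref{lem:SFV3.2}. Set $2^s=(q^2-1)_2$; since $q\equiv3\pmod4$ we have $(q+1)_2=2^{s-1}$ and $s\geq3$, and the hypothesis $n_2\leq(q+1)_2$ forces $v_2(n)\leq s-1$, so $(q^2-1)_2/n_2=2^{\,s-v_2(n)}$ is an integer at least $2$. Write $u=\bigoplus_k\wt{J}_k^{m_k}$ over the even $k$ with $m_k\neq0$, as in \cite[Section 3.2]{SFVinroot}, each $\wt{J}_k$ regular unipotent in $GU_k(q)$; note $u^b=\bigoplus_k(\wt{J}_k^b)^{m_k}$.

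The first step is a uniform one-block estimate: for each even $k$ there is $x_k\in GU_k(q)$ with $\wt{J}_k^{x_k}=\wt{J}_k^b$, $|\det(x_k)|_{2'}=1$, and $|\det(x_k)|_2=2^{\max(s-v_2(k),0)}=(q^2-1)_2/\gcd((q^2-1)_2,k)$. For $k\equiv2\pmod4$ this is exactly \prettyref{lem:regunip}(c). For $4\mid k$ one takes $x_k$ from \prettyref{lem:regunip}(b) and reads the $2$-part off the explicit diagonal there: the power of $\bar b$ occurring in $\det(x_k)$ has odd order when $4\mid k$ (the sum of the diagonal exponents of $\bar b$ is then even), while by \prettyref{lem:regunip}(a) the element $\beta_k$ is a primitive $(q^2-1)_2$-th root of unity of $2$-power order, so after clearing the odd part as in the proof of \prettyref{lem:regunip} one gets $\det(x_k)=(\beta_k)_2^{\,k}$, of order $2^{\max(s-v_2(k),0)}$.

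Now put $x_0=\bigoplus_k x_k^{m_k}$, so $u^{x_0}=u^b$, $|\det(x_0)|_{2'}=1$, and $\det(x_0)=\prod_k\det(x_k)^{m_k}$ with the $k$-th factor of order $2^{\max(s-v_2(km_k),0)}$ in the Sylow $2$-subgroup $T_{1,2}$ of $T_1$, a cyclic group of order $2^{s-1}$. Let $j=\min_k v_2(km_k)$ and $\mu=\min_k v_2(k)\leq j$; since $j\leq v_2(n)\leq s-1$ we have $j<s$. Working additively in $T_{1,2}$, each factor with $v_2(km_k)=j$ is $2^{\,s-1-j}$ times a unit and each factor with $v_2(km_k)>j$ lies in $2^{\,s-j}T_{1,2}$. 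Hence, if the number of block-sizes $k$ with $v_2(km_k)=j$ is odd, then $v_2(n)=j$ and $|\det(x_0)|_2=2^{\,s-j}=2^{\,s-v_2(n)}$, so $x:=x_0$ works. If that number is even, then $v_2(n)>j$ and $|\det(x_0)|_2\leq2^{\,s-j-1}$; in that case I would instead use \prettyref{lem:SFV3.2} to replace $x_0$ by $x$ with $\det(x)=\det(x_0)\cdot\prod_k\delta_k^{\,k}$ for suitable $\delta_k\in T_1$ of $2$-power order. The $2$-part of $\{\prod_k\delta_k^{\,k}\}$ is the subgroup $Z$ of $T_{1,2}$ of order $2^{\max(s-1-\mu,0)}$, and since $\mu\leq j$ it contains $\langle\det(x_0)_2\rangle$, so $\det(x)_2$ ranges over all of $Z$; as $v_2(n)>j\geq\mu$ gives $2^{\,s-v_2(n)}\mid 2^{\,s-1-\mu}$, we may choose $\det(x)_2$ of order exactly $2^{\,s-v_2(n)}$, while keeping $|\det(x)|_{2'}=1$.

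I expect the main obstacle to be the $2$-adic bookkeeping: proving the uniform one-block estimate through the split into $k\equiv2\pmod4$ and $4\mid k$ (whose $\bar b$-contributions have opposite parity of order), and running the parity dichotomy that pins $v_2(n)$ to the number of block-sizes of minimal $v_2(km_k)$. The delicate point is that in the ``even'' case the adjustment group $Z$ really is large enough, which is precisely where the standing hypotheses $n\equiv0\pmod4$ and $n_2\leq(q+1)_2$ are needed (together they force $v_2(n)>\min_k v_2(k)$ there). Clearing odd parts throughout is routine, just as in \prettyref{lem:regunip} and \prettyref{cor:unipconj}.
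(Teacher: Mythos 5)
Your proof is correct and follows the paper's own strategy: decompose $u=\bigoplus_k\wt{J}_k^{m_k}$, take the block-wise conjugators from \prettyref{lem:regunip}, and correct determinants with \prettyref{lem:SFV3.2}; the only difference is that the paper computes the total determinant more directly, by using \prettyref{lem:SFV3.2} to synchronize all the $(\beta_k)_2$ to a single primitive $(q^2-1)_2$-root $\beta_2$ so that $\det(x)=\pm\beta_2^n$ visibly has order $(q^2-1)_2/n_2$, whereas you run a $2$-adic valuation and parity analysis on the assembled product (which also works, and in fact tracks the sign in the boundary case $n_2=(q+1)_2$ more carefully). One internal slip: in the cyclic group $T_{1,2}$ of order $2^{s-1}$, an element of order $2^{s-j}$ is $2^{j-1}$ times a unit (not $2^{s-1-j}$ times a unit), and the factors with $v_2(km_k)>j$ lie in $2^{j}T_{1,2}$ (not $2^{s-j}T_{1,2}$); the conclusions you then draw ($|\det(x_0)|_2=2^{s-j}$ in the odd case and $\leq 2^{s-j-1}$ in the even case) are the ones that follow from the corrected statements, so the argument is unaffected.
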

 \begin{proof}
 
 As in the proof of \prettyref{cor:unipconj}, let $\wt{x}=\bigoplus_k{x_k}^{m_k}$, where for each $k$ such that $m_k\neq 0$, $x_k$ is an element of $GU_k(q)$ conjugating $\tilde{J}_k$ to $\tilde{J}_k^b$ as in \prettyref{lem:regunip}.  Now, each $x_k$ has determinant $\pm{(\beta_k)_2}^k $, where ${(\beta_k)_2}$ is a $(q^2-1)_2$-root of unity in $\F_{q^2}^\times$, by \prettyref{lem:beta2}, since the $y$ found there has multiplicative order $(p-1)_2=2$.  Then taking $\delta_k\in\F_{q^2}^\times$ to be the primitive $(q+1)_2$ root of unity $\delta_k=(\beta_k)_2^2$, we may use 
 \prettyref{lem:SFV3.2} to replace $x_k$ with an element whose determinant is $\pm(\beta_k)_2^k\delta_k^{rk}=\pm(\beta_k)_2^{k(2r+1)}$ for any odd $r$, yielding that we may replace each $x_k$ with an element whose determinant is $\pm\beta_2^k$ for a fixed $(q^2-1)_2$-root of unity $\beta_2$.  Hence the resulting $x$ satisfies $\det(x)=\pm\beta_2^n$, which has the stated order.
\end{proof}
 
\section{Application to GGGRs} \label{sec:MoreGGGR}
 
 Here we keep the notation of \prettyref{sec:GGGR} and return to the more general case that $\wt{G}=GL_n^\epsilon(q)$ and $G=SL_n^\epsilon(q)$ for $\epsilon\in\{\pm1\}$.  
 
 \begin{lemma}\label{lem:varphiuconj}
 Let $u\in \mathcal{C}\cap U_{d,2}$ and suppose that $x$ is an element normalizing $U_{d,2}$ and conjugating $u$ to $u^b$.  Then $\varphi_u^x=\varphi_u^b$.
 \end{lemma}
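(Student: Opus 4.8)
The plan is to deduce the identity from two standard properties of Kawanaka's construction of $\varphi_u$. Recall (see \cite[Section 2]{Geck04}, \cite{kawanaka1985}, or \cite[Section 15]{Taylor16}) that one fixes once and for all a nontrivial additive character $\psi\colon\F_q\to\C^\times$, whose values are $p$-th roots of unity, together with a nondegenerate invariant trace form $\kappa$ on the ambient Lie algebra, and that, via a Kawanaka-type isomorphism identifying $U_{d,2}$ with $\mathrm{Lie}(\bg{U}_{d,2})$ and $u$ with a nilpotent element $e_u$, one has, roughly, $\varphi_u(v)=\psi\bigl(\kappa(e_u,\log v)\bigr)$ for $v\in U_{d,2}$, where $\log v$ denotes the image of $v$ under this isomorphism. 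I will use two consequences. First, the assignment $u\mapsto\varphi_u$ is equivariant for conjugation by any $x$ normalizing $U_{d,2}$: such an $x$ preserves $\kappa$ and is compatible with the Kawanaka isomorphism (the latter intertwining conjugation by $x$ with the adjoint action), so that $\varphi_u^{\,x}=\varphi_{x^{-1}ux}$. Second, since $u$ is unipotent, the element $u^{b}$ is carried by the Kawanaka isomorphism to $\bar b\,e_u$ modulo the subgroup on which $\varphi_u$ is trivial, whence $\varphi_{u^{b}}(v)=\psi\bigl(\kappa(\bar b\,e_u,\log v)\bigr)=\psi\bigl(\kappa(e_u,\log v)\bigr)^{b}=\varphi_u(v)^{b}$ for every $v$; that is, $\varphi_{u^{b}}=\varphi_u^{\,b}$.

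Granting these, the argument will be short. Since $x$ normalizes $U_{d,2}$, the first property gives $\varphi_u^{\,x}=\varphi_{x^{-1}ux}$; by hypothesis $x^{-1}ux=u^{b}$, so $\varphi_u^{\,x}=\varphi_{u^{b}}$, and the second property then yields $\varphi_u^{\,x}=\varphi_{u^{b}}=\varphi_u^{\,b}$, as desired. Here $\varphi_u^{\,b}$ denotes the linear character $v\mapsto\varphi_u(v)^{b}$; since $\varphi_u$ has order dividing $p$, this coincides with $\tau\varphi_u$ for the automorphism $\tau\colon\zeta_p\mapsto\zeta_p^{b}$ fixed above, which is the form in which the statement will be applied in \prettyref{sec:MoreGGGR}.

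The one point that will require genuine care is the second property, namely that passing from $u$ to $u^{b}$ multiplies the associated linear functional by $\bar b$. When $p$ is large this is transparent, since the Kawanaka isomorphism may then be taken to be the truncated logarithm and $\log(u^{b})=b\log u$ for unipotent $u$; for general $p$ I would quote the compatibility built into Kawanaka's construction (\cite{kawanaka1985, Taylor16}) rather than reprove it. Two routine checks accompany this: that the elements $x$ produced in \prettyref{sec:Unipotent} lie in $\wt{G}$, for which the Kawanaka data is equivariant, and that $u$ and $u^{b}$ lie in the same unipotent class $\mathcal{C}$---which holds because $\gcd(b,p)=1$---so that $\varphi_u$ and $\varphi_{u^{b}}$ are both defined on $U_{d,2}$.
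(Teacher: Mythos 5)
Your proposal is correct and follows essentially the same route as the paper: the paper's proof is exactly the chain $\varphi_u^x(g)=\varphi_u(xgx^{-1})=\varphi_{x^{-1}ux}(g)=\varphi_{u^b}(g)=\varphi_u(g)^b$, citing \cite[Remark 2.2]{Geck04} for the equivariance step and the construction in \cite{Geck04, Taylor16} for the scaling step $\varphi_{u^b}=\varphi_u^b$. Your additional justification of the scaling property via the Kawanaka isomorphism (sending $u^b$ to $\bar b\,e_u$ modulo the subgroup on which $\varphi_u$ is trivial) is a correct elaboration of what the paper leaves to the references.
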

 \begin{proof}
This follows from the construction of $\varphi_u$ in \cite[Section 5]{Taylor16} or \cite[Section 2]{Geck04}.  Indeed, for each $g$ in $U_{d,2}$, we have $\varphi_u^x(g)=\varphi_u(xgx^{-1})=\varphi_{x^{-1}ux}(g)=\varphi_{u^x}(g)=\varphi_{u^b}(g)=\varphi_u(g)^b$, where the second equality is noted in \cite[Remark 2.2]{Geck04}.  
\end{proof}
 
 \begin{lemma}\label{lem:conjinP}
 Let $u\in \mathcal{C}\cap U_{d,2}$ and $\epsilon=-1$.  Then the elements $x$ found in \prettyref{cor:unipconj} and \prettyref{lem:unipconj0mod4} are members of $P_{d}$, and hence normalize $U_{d,2}$.
 \end{lemma}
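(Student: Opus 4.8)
The plan is to reduce the statement to the single assertion $x\in P_d$, and then to verify that by showing $x$ stabilizes the flag which cuts out the parabolic $\bg{P}_d$. For the reduction, recall from \prettyref{sec:GGGR} that $\bg{U}_{d,i}\lhd\bg{P}_d$ for every $i$ and that $\bg{P}_d$ and $\bg{U}_{d,2}$ are $F_\epsilon$-stable; hence once $x\in P_d=\bg{P}_d^{F_\epsilon}$ is known, $x$ normalizes $\bg{U}_{d,2}$ and therefore also $U_{d,2}=\bg{U}_{d,2}^{F_\epsilon}$. So it suffices to prove $x\in\bg{P}_d$, using $x\in\wt{G}$.

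Next I would record the shape of $x$ from the proofs of \prettyref{cor:unipconj} and \prettyref{lem:unipconj0mod4}: with respect to the decomposition realizing $u=\bigoplus_k\wt{J}_k^{m_k}$ (the sum over $k$ with $m_k\neq 0$, each $\wt{J}_k\in GU_k(q)$ a regular unipotent element since $\epsilon=-1$), we have $x=\bigoplus_k x_k^{m_k}$, where each $x_k\in GU_k(q)$ satisfies $x_k\wt{J}_kx_k^{-1}=\wt{J}_k^b$, possibly after multiplication by an element of $C_{GU_k(q)}(\wt{J}_k)$ supplied by \prettyref{lem:SFV3.2}. Let $\mathcal{F}_k$ be the unique complete flag of the $k$-dimensional underlying space stabilized by $\wt{J}_k$. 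Then $\wt{J}_k^b\in\langle\wt{J}_k\rangle$ stabilizes $\mathcal{F}_k$ as well, while $\wt{J}_k^b=x_k\wt{J}_kx_k^{-1}$ stabilizes $x_k(\mathcal{F}_k)$; since a regular unipotent element stabilizes only one complete flag, $x_k(\mathcal{F}_k)=\mathcal{F}_k$. Thus each $x_k$ lies in the Borel subgroup $\mathrm{Stab}(\mathcal{F}_k)$ of $GU_k(q)$ through $\wt{J}_k$, and this persists after multiplying by elements of $C_{GU_k(q)}(\wt{J}_k)=C_{GU_k(q)}(\wt{J}_k^b)$, which likewise stabilize $\mathcal{F}_k$.

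To conclude, I would invoke the description of $\bg{P}_d$ in terms of the cocharacter $\tau$ of $\wt{\bg{G}}$ appearing in an $\mathfrak{sl}_2$-triple attached to $u$, namely $d(\alpha)=\langle\alpha,\tau\rangle$ and $\bg{P}_d=\{g\in\wt{\bg{G}}\,\mid\,\lim_{t\to 0}\tau(t)g\tau(t)^{-1}\text{ exists}\}$, which is exactly the stabilizer in $\wt{\bg{G}}$ of the filtration $\{V_{\geq c}\}_c$ of the $n$-dimensional space by $\tau$-weight spaces. In the model with $u=\bigoplus_k\wt{J}_k^{m_k}$ one has $\tau=\bigoplus_k\tau_k^{m_k}$, with $\tau_k$ the principal (hence regular) cocharacter of the $k$-th factor, so each $V_{\geq c}$ decomposes as $\bigoplus_k(V_k)_{\geq c}^{m_k}$ where $(V_k)_{\geq c}$ is a term of $\mathcal{F}_k$. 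Since each $x_k$ stabilizes $\mathcal{F}_k$, it stabilizes every $(V_k)_{\geq c}$, and hence $x=\bigoplus_k x_k^{m_k}$ stabilizes every $V_{\geq c}$; that is, $x\in\bg{P}_d$, which by the first paragraph finishes the proof.

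The step I expect to be the real work is not any of the manipulations above but the matching of the two pictures: one must check that the block-diagonal model of $u$ and of the $x_k$ taken from \cite[Section 3.2]{SFVinroot} is set up relative to a maximal torus and Borel of $\bg{G}$ compatible with those fixed in \prettyref{sec:GGGR} to define $d$, $\bg{U}_{d,i}$ and $\bg{P}_d$ — equivalently, that in that model $u=\bigoplus_k\wt{J}_k^{m_k}$ genuinely lies in $\mathcal{C}\cap U_{d,2}$, so that the flags $\mathcal{F}_k$ refine, block by block, the $\tau$-weight filtration defining $\bg{P}_d$. A minor additional point is to confirm that the modifications via \prettyref{lem:SFV3.2} keep each $x_k$ inside $\mathrm{Stab}(\mathcal{F}_k)$, which holds because $C_{GU_k(q)}(\wt{J}_k)$ is contained in that Borel. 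Once this alignment is in place, the containment $x\in P_d$ is immediate.
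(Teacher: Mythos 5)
Your outline is internally coherent, but it takes a genuinely different route from the paper, and it leaves open exactly the point you yourself flag as ``the real work.'' The paper's proof is a short coset argument that makes the whole question of block-diagonal models irrelevant: by \cite[Theorem 2.1.1]{Kawanaka86} (extended from $\bg{G}$ to $\wt{\bg{G}}=\bg{G}Z(\wt{\bg{G}})$) one has $C_{\wt{\bg{G}}}(u)\leq\bg{P}_d$, and by \cite[Lemma 4.6]{SFTaylorTypeA} there exists \emph{some} $y\in\bg{P}_d$ with $u^y=u^b$; hence for \emph{any} $x\in\wt{G}$ with $u^x=u^b$ we get $xy^{-1}\in C_{\wt{\bg{G}}}(u)\leq\bg{P}_d$ and so $x\in\wt{G}\cap\bg{P}_d=P_d$. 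In particular the conclusion holds for every conjugator of $u$ to $u^b$, no matter how it was produced, so there is nothing to align.

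The gap in your version is the deferred compatibility claim. The element $u\in\mathcal{C}\cap U_{d,2}$ of the lemma is a representative chosen inside the root subgroups attached to the fixed $F_\epsilon$-stable pair $\bg{T}\leq\bg{B}$, whereas the elements $x=\bigoplus_k x_k^{m_k}$ of \prettyref{cor:unipconj} and \prettyref{lem:unipconj0mod4} are built relative to the Jordan-block model $\bigoplus_k\wt{J}_k^{m_k}$ of \cite[Section 3.2]{SFVinroot}. For $\epsilon=-1$ those representatives $\wt{J}_k\in GU_k(q)$ are obtained by a change of basis and are not visibly upper triangular for the standard $F_{-1}$-stable Borel, and the cocharacter $\tau$ realizing $d(\alpha)=\langle\alpha,\tau\rangle$ must be chosen in $\bg{T}$ with its weight spaces matched, basis vector by basis vector, to the terms of your flags $\mathcal{F}_k$. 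If instead one reconciles the two pictures by conjugating ($u=(\bigoplus_k\wt{J}_k^{m_k})^g$), then the relevant conjugator of $u$ is $gxg^{-1}$ and the block-diagonal structure your argument relies on is lost relative to the fixed basis. None of this is carried out, and it is not a formality: it is precisely the content that the Kawanaka centralizer containment replaces. I would either supply that identification in detail or, more efficiently, switch to the paper's argument, which also upgrades the lemma to the statement you actually want downstream (every $x\in\wt{G}$ conjugating $u$ to $u^b$ lies in $P_d$). A smaller point: your per-block correction step implicitly assumes the elements supplied by \prettyref{lem:SFV3.2} can be taken block-diagonally with each block in $C_{GU_k(q)}(\wt{J}_k)$; that is how the paper applies it, but it is worth saying, and note that under the paper's argument even a non-block-diagonal centralizing correction is harmless since $C_{\wt{\bg{G}}}(u)\leq\bg{P}_d$.
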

 \begin{proof}
 First, note that $C_{\wt{\bg{G}}}(u)\leq \bg{P}_{d}$.  Indeed, this is noted in  \cite[Theorem 2.1.1]{Kawanaka86} for simply connected groups, and here we have $\wt{\bg{G}}=\bg{G}Z(\wt{\bg{G}})$.  Further, $u$ is conjugate to $u^b$ in $\bg{P}_{d}$ by \cite[Lemma 4.6]{SFTaylorTypeA}.  So $u^x=u^b=u^y$ for some $y\in \bg{P}_{d}$, which yields that $xy^{-1}\in C_{\wt{\bg{G}}}(u)$, and hence $x\in \wt{G}\cap \bg{P}_{d}$.  This shows that $x$ is contained in $P_{d}$, which contains $U_{d,2}$ as a normal subgroup.
 \end{proof}
 
 \begin{lemma}\label{lem:GGGRarg}
 Let $G=SL^\epsilon_n(q)$ and  $\wt{G}=GL^\epsilon_n(q)$, with $\epsilon\in\{\pm1\}$. Let $\wt{\chi}:=\wt{\chi}_\lambda\in\irr(\wt{G})$ and let $\wt{\Gamma}_u=[U_{1,d}:U_{2,d}]^{-1/2}\ind_{U_{d,2}}^{\wt{G}}(\varphi_u)$ be a Generalized Gelfand-Graev character of $\wt{G}$ such that $\langle \wt{\Gamma}_u, \wt{\chi}\rangle_{\wt{G}}=1$.  Let $\sigma\in\mathrm{Gal}(\bar{\Q}/\F_\lambda)$ and let $x\in\wt{G}$ normalizing $U_{d,2}$ such that $\sigma\varphi_u=\varphi_u^x$.  Then for $\chi\in\irr(G|\wt{\chi})$, there is some conjugate $\chi_0$ of $\chi$ such that $\sigma\chi_0=\chi_0^x$.
 \end{lemma}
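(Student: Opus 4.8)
The plan is to follow the template of the proof of \prettyref{prop:initialcase}, but tracking the conjugating element $x$ rather than merely the field of values.

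\emph{Step 1 (descend to a GGGR of $G$ with a distinguished constituent).} By transitivity of induction, $\wt{\Gamma}_u=\ind_G^{\wt{G}}(\Gamma_u)$, where $\Gamma_u=[U_{d,1}:U_{d,2}]^{-1/2}\ind_{U_{d,2}}^{G}(\varphi_u)$ is the associated GGGR of $G$, exactly as in the proof of \prettyref{prop:initialcase}. Frobenius reciprocity together with the hypothesis $\langle\wt{\Gamma}_u,\wt{\chi}\rangle_{\wt{G}}=1$ yields $\langle\Gamma_u,\Res^{\wt{G}}_G(\wt{\chi})\rangle_G=1$. Since $\Gamma_u$ is a genuine character, so that all multiplicities are nonnegative integers, and $\Res^{\wt{G}}_G(\wt{\chi})$ is multiplicity-free with all of its constituents $\wt{G}$-conjugate to $\chi$, there is a \emph{unique} constituent $\chi_0\in\irr(G|\wt{\chi})$ with $\langle\Gamma_u,\chi_0\rangle_G=1$. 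This $\chi_0$ is a $\wt{G}$-conjugate of $\chi$, so it suffices to prove that $\sigma\chi_0=\chi_0^{x}$.

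\emph{Step 2 (show that $\sigma\Gamma_u=\Gamma_u^{x}$).} The scalar $[U_{d,1}:U_{d,2}]^{-1/2}$ is rational, as recalled in \prettyref{sec:GGGR}, and hence is fixed by $\sigma$; since the Galois action commutes with induction, $\sigma\Gamma_u=[U_{d,1}:U_{d,2}]^{-1/2}\ind_{U_{d,2}}^{G}(\sigma\varphi_u)=[U_{d,1}:U_{d,2}]^{-1/2}\ind_{U_{d,2}}^{G}(\varphi_u^{x})$, using $\sigma\varphi_u=\varphi_u^{x}$. Because $G\lhd\wt{G}$ and $x$ normalizes $U_{d,2}$, conjugation by $x$ is an automorphism of both $G$ and $U_{d,2}$, and a direct check of the induction formula gives $\ind_{U_{d,2}}^{G}(\varphi_u^{x})=\bigl(\ind_{U_{d,2}}^{G}(\varphi_u)\bigr)^{x}$. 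Therefore $\sigma\Gamma_u=\Gamma_u^{x}$.

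\emph{Step 3 (conclude by uniqueness).} Applying $\sigma$ to $\langle\Gamma_u,\chi_0\rangle_G=1$, and using that a Galois automorphism fixes a $\Z$-valued inner product of characters, gives $\langle\sigma\Gamma_u,\sigma\chi_0\rangle_G=1$; by Step~2 this reads $\langle\Gamma_u^{x},\sigma\chi_0\rangle_G=1$, and since conjugation by $x$ is an isometry of class functions on $G$, it equals $\langle\Gamma_u,(\sigma\chi_0)^{x^{-1}}\rangle_G=1$. On the other hand, $\sigma$ fixes $\F_\lambda=\Q(\Res^{\wt{G}}_G\wt{\chi})$ pointwise and so fixes $\Res^{\wt{G}}_G\wt{\chi}$; hence $\sigma\chi_0$ is again a constituent of $\Res^{\wt{G}}_G\wt{\chi}$, and as $\Res^{\wt{G}}_G\wt{\chi}$ is $\wt{G}$-invariant, conjugating by $x^{-1}\in\wt{G}$ keeps $(\sigma\chi_0)^{x^{-1}}$ among its constituents. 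By the uniqueness established in Step~1, $(\sigma\chi_0)^{x^{-1}}=\chi_0$, that is, $\sigma\chi_0=\chi_0^{x}$, as desired.

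I expect the only real friction to lie in Step~2: confirming that the normalizing scalar is $\sigma$-fixed, and that conjugation by the element $x$ --- which lies in $\wt{G}$ and not in $G$ --- commutes with induction from $U_{d,2}$ up to $G$. Both points are routine once one notes that $G\lhd\wt{G}$ and that $x$ normalizes $U_{d,2}$. The uniqueness invoked in Steps~1 and 3 relies on $\Gamma_u$ being an honest (non-virtual) character, which is part of Kawanaka's construction of GGGRs.
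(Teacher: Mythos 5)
Your proposal is correct and follows essentially the same route as the paper: descend to $\Gamma_u$ with $\wt{\Gamma}_u=\ind_G^{\wt{G}}(\Gamma_u)$, isolate the unique constituent $\chi_0\in\irr(G|\wt{\chi})$ with $\langle\Gamma_u,\chi_0\rangle_G=1$, verify $\sigma\Gamma_u=\Gamma_u^x$, and conclude by uniqueness. The only cosmetic difference is that you transport $x^{-1}$ onto $\sigma\chi_0$ whereas the paper computes $\langle\Gamma_u^x,\chi_0^x\rangle_G=1$ directly; your side remark that $[U_{d,1}:U_{d,2}]^{-1/2}$ is rational (the quotient $U_{d,1}/U_{d,2}$ has even $\F_q$-dimension) is a worthwhile point the paper leaves implicit.
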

 \begin{proof}
 Let $\Gamma_u$ be such that $\wt{\Gamma}_u=\ind_G^{\wt{G}}\Gamma_u$ and  $\Gamma_u=r\cdot\ind_{U_{d,2}}^{{G}}(\varphi_u)$ where $r=[U_{1,d}:U_{2,d}]^{-1/2}$.  
 Then by Clifford theory and Frobenius reciprocity, there is a unique conjugate, $\chi_0$, of $\chi$ such that $\chi_0\in\irr(G|\wt{\chi})$ and $\langle \Gamma_u, \chi_0\rangle_G=1$.  
 Since $\res_G^{\wt{G}}(\wt{\chi})$ is fixed by $\sigma$, we also see $\sigma\chi_0$ is the unique member of $\irr({G}|\wt{\chi})$ satisfying $\langle \sigma\Gamma_u, \sigma\chi_0\rangle_G=1.$  But note that \[\sigma\Gamma_u=r\cdot\ind_{U_{d,2}}^{G}(\sigma\varphi_u)=r\cdot\ind_{U_{d,2}}^{G}(\varphi_u^x)=\Gamma_u^x.\]
  Then
 $\langle \sigma\Gamma_u, \chi_0^x\rangle_{G}=\langle \Gamma_u^x, \chi_0^x\rangle_G=1,$  forcing $\chi_0^x=\sigma\chi_0$ by uniqueness, since $\chi_0^x\in\irr(G|\wt{\chi})$.
 \end{proof}
 
 \section{Main Results} \label{sec:Main}
  
We begin by stating our main results.  The first is an extension of \cite[Theorem 4.8]{turull01} to the case of $SU_n(q)$, describing the field of values $\Q(\chi)$ for each $\chi\in\irr(G)$.

 \begin{theorem}\label{thm:turullext1}
 Let $G=SL^\epsilon_n(q)$ and  $\wt{G}=GL^\epsilon_n(q)$, with $\epsilon\in\{\pm1\}$.  Let $\lambda\in \mathcal{F}_n$ and let $\chi\in\irr(G|\wt{\chi}_\lambda)$.  Then $\Q(\chi)=\F_\lambda$ unless all of the following hold:
\begin{itemize}
\item  $p$ is odd,
\item $q$ is not square,
\item $2\leq n_2\leq (q-\epsilon)_2$, and
\item $\alpha\lambda=\lambda$ for any element $\alpha\in\widehat{T}_1$ of order $n_2$.
\end{itemize} In the latter case,  $\Q(\chi)=\F_\lambda(\sqrt{\eta p})$, where $\eta\in\{\pm1\}$ and $p\equiv\eta\pmod4$.  
 \end{theorem}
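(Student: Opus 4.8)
The plan is to first strip away the situations already covered by the earlier results and then run a Galois-descent argument on the $\wt G$-conjugates of $\chi$ built on the GGGR apparatus of Sections~\ref{sec:GGGR}--\ref{sec:MoreGGGR}. For the reduction, note that the first three bulleted conditions hold \emph{exactly} when we are in case~(2) of Proposition~\ref{prop:initialcase}: the inequalities $2\le n_2\le (q-\epsilon)_2$ force $p,q$ odd, make $n$ even, and give $(n,q-\epsilon)_2=n_2$, so $n/(n,q-\epsilon)$ is odd; conversely, $q$ nonsquare together with $n$ even and $n/(n,q-\epsilon)$ odd gives $n_2=(n,q-\epsilon)_2\le (q-\epsilon)_2$. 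Thus if the first three bullets fail then Proposition~\ref{prop:initialcase}(1) gives $\Q(\chi)=\F_\lambda$ and there is nothing to do; otherwise $\F_\lambda\subseteq\Q(\chi)\subseteq\K:=\F_\lambda(\sqrt{\eta p})$ is a quadratic extension. If in addition $\mathrm{sgn}\lambda\neq\lambda$, Lemma~\ref{lem:sgn} gives $\Q(\chi)=\F_\lambda$, and the fourth bullet then fails because $\mathrm{sgn}=\alpha^{n_2/2}$ for any $\alpha\in\wh{T}_1$ of order $n_2$; so I may also assume $\mathrm{sgn}\lambda=\lambda$, and the task reduces to deciding whether $\Q(\chi)$ equals $\F_\lambda$ or $\K$.

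Next I would translate everything into divisibility conditions. Set $s:=[T_1:\mathcal{I}(\lambda)]$, which by Proposition~\ref{prop:indstabdivides} equals $|\stab_{\wh{T}_1}(\lambda)|$ and divides $(n,q-\epsilon)$; since $T_1$ is cyclic, $\mathcal{I}(\lambda)$ is its unique subgroup of order $(q-\epsilon)/s$, so $t\in\mathcal{I}(\lambda)$ iff $|t|$ divides $(q-\epsilon)/s$. Writing $s=s_2 s_{2'}$ and using $(n,q-\epsilon)_2=n_2$, one checks that $\mathrm{sgn}\lambda=\lambda\iff s_2\ge 2$ and that the fourth bullet holds $\iff n_2\mid s\iff s_2=n_2$. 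Then I would fix $\sigma\in\gal(\bar{\Q}/\F_\lambda)$ whose restriction to $\Q(\zeta_p)$ is the generator $\zeta_p\mapsto\zeta_p^b$ of $\gal(\Q(\zeta_p)/\Q)$; this is possible since $\F_\lambda\cap\Q(\zeta_p)=\Q$ and both fields are abelian over $\Q$, hence linearly disjoint. Because $\sqrt{\eta p}\in\Q(\zeta_p)$ is moved by this generator, $\sigma|_\K$ generates $\gal(\K/\F_\lambda)$, so for any $\wt G$-conjugate $\chi_0$ of $\chi$ we get $\Q(\chi)=\Q(\chi_0)=\F_\lambda$ if $\sigma\chi_0=\chi_0$ and $\Q(\chi)=\K$ otherwise.

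Now the GGGR step. Choose a GGGR $\wt{\Gamma}_u$ of $\wt G$ with $\langle\wt{\Gamma}_u,\wt{\chi}_\lambda\rangle_{\wt G}=1$ (Kawanaka), where $u\in\mathcal{C}\cap U_{d,2}$ and $\mathcal{C}$ is the wavefront set of $\wt{\chi}_\lambda$. Since the values of $\varphi_u$ are $p$-th roots of unity and $\sigma$ raises these to the $b$-th power, $\sigma\varphi_u=\varphi_u^b$. Applying Corollary~\ref{cor:unipconj} or Lemma~\ref{lem:unipconj0mod4} (according to the Jordan type of $\mathcal{C}$) together with Lemma~\ref{lem:conjinP}, I would obtain $x\in P_d$ --- hence normalizing $U_{d,2}$ --- with $u^x=u^b$; then Lemma~\ref{lem:varphiuconj} gives $\varphi_u^x=\varphi_u^b=\sigma\varphi_u$, and Lemma~\ref{lem:GGGRarg} produces a $\wt G$-conjugate $\chi_0$ of $\chi$ with $\sigma\chi_0=\chi_0^x$. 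By Proposition~\ref{prop:indstabdivides}, $\sigma\chi_0=\chi_0$ iff $x\in\stab_{\wt G}(\chi_0)$ iff $\det(x)\in\mathcal{I}(\lambda)$ iff $|\det(x)|$ divides $(q-\epsilon)/s$. (For $\epsilon=1$ the same scheme works using $GL_n(q)$-analogues of the results of Section~\ref{sec:Unipotent}, or one simply cites \cite[Theorem~4.8]{turull01}; so below I take $\epsilon=-1$.)

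Finally comes the case analysis, where the real difficulty lies. If $4\nmid n$ then $n_2=2$ and the fourth bullet holds, and the last sentence of Corollary~\ref{cor:unipconj} lets me take $|\det(x)|=(q+1)_2$, which does not divide $(q+1)/s$ since $s_2\ge 2$; so $\sigma\chi_0\neq\chi_0$ and $\Q(\chi)=\K$. If $4\mid n$ and the fourth bullet holds ($s_2=n_2\ge 4$, forcing $q\equiv 3\pmod 4$, so $(q-1)_2=2$), then whichever of Corollary~\ref{cor:unipconj} ($|\det(x)|=(q+1)_2$) or Lemma~\ref{lem:unipconj0mod4} ($|\det(x)|=(q^2-1)_2/n_2=2(q+1)_2/n_2$) applies, $|\det(x)|$ again fails to divide $(q+1)/s$, so $\Q(\chi)=\K$. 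If $4\mid n$ and the fourth bullet fails, then $2\le s_2\le n_2/2$ (so $n_2\ge 4$ and again $q\equiv 3\pmod 4$), and I must show $\Q(\chi)=\F_\lambda$: here Corollary~\ref{cor:unipconj} would give $|\det(x)|=(q+1)_2\nmid (q+1)/s$ and the wrong answer, so the argument hinges on proving that in this case the wavefront set $\mathcal{C}$ of $\wt{\chi}_\lambda$ has \emph{no} Jordan block of odd size and an \emph{even} number of Jordan blocks of size $\equiv 2\pmod 4$ --- exactly the hypothesis of Lemma~\ref{lem:unipconj0mod4} --- whence $|\det(x)|=2(q+1)_2/n_2$ does divide $(q+1)/s$ (precisely because $s_2\le n_2/2$), giving $\sigma\chi_0=\chi_0$ and $\Q(\chi)=\F_\lambda$. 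I expect this last structural claim to be the main obstacle: it amounts to reading the multiset of Jordan block sizes of the wavefront set of $\wt{\chi}_\lambda$ off the partition-valued datum $\lambda$ via the parametrization of Section~\ref{sec:Param}, and matching the condition that some $\alpha\in\wh{T}_1$ of order $n_2$ moves $\lambda$ (i.e., $s_2<n_2$), under the standing assumption $\mathrm{sgn}\lambda=\lambda$, to the block-size parity statement above.
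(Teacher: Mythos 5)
Your overall architecture is the right one and largely coincides with the paper's: reduce via Proposition \ref{prop:initialcase} and Lemma \ref{lem:sgn} to the quadratic situation with $\mathrm{sgn}\lambda=\lambda$, pick a Kawanaka GGGR with $\langle\wt{\Gamma}_u,\wt{\chi}_\lambda\rangle=1$, transport the Galois action $\zeta_p\mapsto\zeta_p^b$ to a conjugation by some $x$ normalizing $U_{d,2}$ via Lemmas \ref{lem:varphiuconj}, \ref{lem:conjinP}, and \ref{lem:GGGRarg}, and then decide $\sigma\chi_0=\chi_0^x\overset{?}{=}\chi_0$ by comparing $|\det(x)|$ with $|\mathcal{I}(\lambda)|$. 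Your bookkeeping with $s=[T_1:\mathcal{I}(\lambda)]$ and the divisibility computations in each subcase are correct, and the $4\nmid n$ case goes through exactly as in Proposition \ref{prop:n2mod4}.

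However, there is a genuine gap exactly where you flag ``the main obstacle,'' and you have not closed it. In the subcase $4\mid n$ with $\mathrm{sgn}\lambda=\lambda$ but $s_2<n_2$, you need to know that the unipotent class attached to $\wt{\chi}_\lambda$ has no Jordan block of odd size (so that Lemma \ref{lem:unipconj0mod4} rather than Corollary \ref{cor:unipconj} applies, and $|\det(x)|=2(q+1)_2/n_2$ divides $(q+1)/s$). You propose to obtain this by computing the wavefront set explicitly from the partition-valued datum $\lambda$ and matching parities against the stabilizer of $\lambda$ in $\wh{T}_1$; this is not carried out, and it is a substantially harder (and unnecessary) route. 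The paper instead proves the contrapositive directly and cheaply (Proposition \ref{prop:oddelemoddindex}): if $u$ has an elementary divisor $(t-1)^k$ with $k$ odd, then Lemma \ref{lem:SFV3.2} produces $g\in C_{\wt{G}}(u)$ with $\det(g)=\delta^k$ of order $(q+1)_2$; since $g$ centralizes $u$ it fixes $\varphi_u$, so Lemma \ref{lem:GGGRarg} with $\sigma$ trivial shows $g$ stabilizes some constituent $\chi_0$, whence $(q+1)_2$ divides $[I:G]$ and $[\wt{G}:I]$ is odd, i.e., $\mathrm{sgn}\lambda\neq\lambda$ by Lemma \ref{lem:orbiteven}. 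This single observation eliminates odd blocks whenever $[\wt{G}:I]$ is even and is what makes Proposition \ref{prop:n0mod4} work; without it (or your unproven combinatorial substitute), your argument in the $4\mid n$ case does not go through. A second, smaller omission: in the subcase $4\mid n$ with $s_2=n_2$ you invoke ``whichever of Corollary \ref{cor:unipconj} or Lemma \ref{lem:unipconj0mod4} applies'' without verifying that every unipotent class falls under one of the two hypotheses; this does hold (an odd number of blocks of size $\equiv 2\pmod 4$ forces an odd-size block when $4\mid n$, cf.\ Remark \ref{rem:condsnmod4}), but it should be said.
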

 
 Taking into consideration \prettyref{rem:sigmainv}, \prettyref{thm:turullext1} immediately yields the following extension of \cite[Proposition 6.2]{turull01}.

 \begin{corollary}\label{cor:turullext}
 Let $G=SL^\epsilon_n(q)$ and  $\wt{G}=GL^\epsilon_n(q)$, with $\epsilon\in\{\pm1\}$.  Let $\lambda\in \mathcal{F}_n$ and let $\chi\in\irr(G|\wt{\chi}_\lambda)$.   Then the following are equivalent:
 
 \begin{itemize}
 \item $\chi$ is real-valued.
 \item There exists some $\alpha' \in\widehat{T}_1$ 
 such that $\sigma_{-1}\lambda=\alpha'\lambda$, and if $p$ is odd, $q$ is not a square, $2\leq n_2\leq (q-\epsilon)_2$, and $\alpha\lambda=\lambda$ for any element $\alpha\in\widehat{T}_1$ 
 of order $n_2$, then $p\equiv1\pmod4$.
 \end{itemize}
 \end{corollary}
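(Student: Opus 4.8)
The plan is to derive Corollary \ref{cor:turullext} directly from Theorem \ref{thm:turullext1} together with Remark \ref{rem:sigmainv}, splitting according to whether the four exceptional hypotheses of Theorem \ref{thm:turullext1} hold.

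First I would record when the field $\F_\lambda$ is real. Since $\F_\lambda$ is by definition the field of values of $\res^{\wt{G}}_G(\wt{\chi}_\lambda)$, Remark \ref{rem:sigmainv} tells us that $\F_\lambda\subseteq\R$ (equivalently, $\res^{\wt{G}}_G(\wt{\chi}_\lambda)$ is real-valued) precisely when $\sigma_{-1}\in\mathrm{Galr}(\lambda)$, i.e.\ precisely when $\sigma_{-1}\lambda=\alpha'\lambda$ for some $\alpha'\in\wh{T}_1$. Because $\F_\lambda\subseteq\Q(\chi)$ in all cases, realness of $\chi$ forces $\F_\lambda\subseteq\R$; this already shows that the first clause of the second bullet of the corollary is necessary whenever $\chi$ is real-valued.

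Next I would invoke Theorem \ref{thm:turullext1}. If the four exceptional conditions do not all hold, then $\Q(\chi)=\F_\lambda$, so $\chi$ is real-valued if and only if $\F_\lambda\subseteq\R$, i.e.\ if and only if $\sigma_{-1}\lambda=\alpha'\lambda$ for some $\alpha'$; and in this case the conditional ``if $p$ is odd, $q$ is not a square, $2\le n_2\le(q-\epsilon)_2$, and $\alpha\lambda=\lambda$ for all $\alpha$ of order $n_2$, then $p\equiv1\pmod4$'' is vacuously true, since its hypotheses are exactly those four conditions. Thus the two conditions of the corollary are equivalent here. If instead all four exceptional conditions hold, Theorem \ref{thm:turullext1} gives $\Q(\chi)=\F_\lambda(\sqrt{\eta p})$ with $\eta\in\{\pm1\}$ and $p\equiv\eta\pmod4$. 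I would then use the elementary observation that $\F_\lambda(\sqrt{\eta p})\subseteq\R$ if and only if $\F_\lambda\subseteq\R$ and $\sqrt{\eta p}\in\R$: one direction is immediate since a subfield of $\R$ lies in $\R$, and conversely $\F_\lambda\subseteq\R$ together with $\eta=1$ gives $\sqrt{\eta p}=\sqrt{p}\in\R$. As $\sqrt{\eta p}\in\R$ amounts to $\eta=1$, i.e.\ $p\equiv1\pmod4$, we conclude that $\chi$ is real-valued exactly when $\sigma_{-1}\lambda=\alpha'\lambda$ for some $\alpha'$ and $p\equiv1\pmod4$. Since the four exceptional conditions here coincide with the hypotheses of the conditional in the corollary and they hold, that conditional collapses to ``$p\equiv1\pmod4$'', matching the two conditions again.

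I do not expect a genuine obstacle; the argument is a bookkeeping exercise. The only points requiring care are recognizing that the ``if $\dots$ then $\dots$'' clause of the corollary is vacuously satisfied precisely in the case where Theorem \ref{thm:turullext1} yields $\Q(\chi)=\F_\lambda$, and the small field-theoretic fact that the quadratic extension $\F_\lambda(\sqrt{\eta p})$ is real exactly when $\F_\lambda$ is real and $\eta=1$ (here it is also worth noting $\sqrt{\eta p}\notin\F_\lambda$, since $\F_\lambda\cap\Q(\zeta_p)=\Q$, so this is a genuine quadratic extension, though this is not strictly needed for the realness dichotomy).
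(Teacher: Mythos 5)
Your argument is correct and is exactly the deduction the paper has in mind: the authors simply assert that Remark \ref{rem:sigmainv} together with Theorem \ref{thm:turullext1} ``immediately yields'' the corollary, and your write-up supplies precisely those bookkeeping details (the characterization of when $\F_\lambda$ is real, the vacuous truth of the conditional in the non-exceptional case, and the observation that $\F_\lambda(\sqrt{\eta p})$ is real iff $\F_\lambda$ is real and $p\equiv 1\pmod 4$). No discrepancy with the paper's approach.
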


The remainder of this section will be devoted to proving \prettyref{thm:turullext1}.  We begin with an observation restricting the situation of \prettyref{cor:unipconj}. 
 
 \begin{proposition}\label{prop:oddelemoddindex}
Let $G=SL^\epsilon_n(q)$ and  $\wt{G}=GL^\epsilon_n(q)$, with $\epsilon\in\{\pm1\}$. Let $\wt{\chi}\in\irr(\wt{G})$ and let $\wt{\Gamma}_u$ be a GGGR of $\wt{G}$ such that $\langle \wt{\Gamma}_u, \wt{\chi}\rangle_{\wt{G}}=1$. Further, assume that $u$ has an elementary divisor of the form $(t-1)^k$ with $k$ odd.  Then $[\wt{G}:I]$ is odd, where $I=\stab_{\wt{G}}(\chi)$ for any $\chi\in\irr({G}|\wt{\chi})$.
  In particular, in this case, $\F_\lambda=\Q(\chi)$ by \prettyref{lem:sgn}.
 \end{proposition}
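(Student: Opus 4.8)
The plan is to prove that $[\wt{G}:I]$ is odd; then \prettyref{lem:orbiteven} gives $\mathrm{sgn}\lambda\neq\lambda$, and \prettyref{lem:sgn} gives $\F_\lambda=\Q(\chi)$, which is the ``in particular'' assertion. By \prettyref{prop:indstabdivides}, $I$ is exactly the set of elements of $\wt{G}$ whose determinant lies in $\mathcal{I}(\lambda)$; since $\det\colon\wt{G}\to T_1$ is surjective with kernel $G$, this shows $[\wt{G}:I]=[T_1:\mathcal{I}(\lambda)]$, and also that $I$ is normal in $\wt{G}$, so $I$ is simultaneously the $\wt{G}$-stabilizer of every $\wt{G}$-conjugate of $\chi$. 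Since $T_1$ is cyclic, it therefore suffices to exhibit an element of $\mathcal{I}(\lambda)$ of order $(q-\epsilon)_2$: then $\mathcal{I}(\lambda)$ contains the unique Sylow $2$-subgroup of $T_1$, forcing $[T_1:\mathcal{I}(\lambda)]$ to be odd.

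The key step is to show $C_{\wt{G}}(u)\subseteq I$. Writing $\wt{\Gamma}_u=\ind_G^{\wt{G}}(\Gamma_u)$ with $\Gamma_u=r\,\ind_{U_{d,2}}^{G}(\varphi_u)$ as in \prettyref{lem:GGGRarg}, Frobenius reciprocity turns $\langle\wt{\Gamma}_u,\wt{\chi}\rangle_{\wt{G}}=1$ into $\langle\Gamma_u,\res^{\wt{G}}_G(\wt{\chi})\rangle_G=1$; since $\res^{\wt{G}}_G(\wt{\chi})$ is the multiplicity-free sum of the $\wt{G}$-conjugates of $\chi$ (Clifford theory; cf.\ the discussion before \prettyref{lem:irrparam}), there is a unique such conjugate $\chi_0$ with $\langle\Gamma_u,\chi_0\rangle_G=1$. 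Now any $g\in C_{\wt{G}}(u)$ lies in $P_d$, because $C_{\wt{\bg{G}}}(u)\leq\bg{P}_d$ (as in the proof of \prettyref{lem:conjinP}, using $\wt{\bg{G}}=\bg{G}Z(\wt{\bg{G}})$ and \cite[Theorem 2.1.1]{Kawanaka86}); hence $g$ normalizes $U_{d,2}$, and since $u^g=u$ it fixes $\varphi_u$ (by the computation in \prettyref{lem:varphiuconj}). Thus $g$ satisfies the hypotheses of \prettyref{lem:GGGRarg} with $\sigma=\mathrm{id}$, so $\chi_0^g=\chi_0$ and $g\in\stab_{\wt{G}}(\chi_0)=I$. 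Therefore $\det(C_{\wt{G}}(u))\subseteq\mathcal{I}(\lambda)$.

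Finally, since $u$ has an elementary divisor $(t-1)^k$ with $k$ odd, the number $m_k$ of Jordan blocks of size $k$ is nonzero, so \prettyref{lem:SFV3.2} produces $g\in C_{\wt{G}}(u)$ with $\det(g)=\delta^k$, where $\delta$ is a generator of the Sylow $2$-subgroup of $T_1$ (take the remaining $\delta_j$ equal to $1$); as $k$ is odd, $\delta^k$ still has order $(q-\epsilon)_2$. This element lies in $\mathcal{I}(\lambda)$, which completes the proof. I expect the main obstacle to be the containment $C_{\wt{G}}(u)\subseteq I$ — and within that, verifying that $C_{\wt{G}}(u)$ lands in $P_d$ so that the multiplicity-one argument of \prettyref{lem:GGGRarg} applies — after which everything reduces to cyclic-group bookkeeping via \prettyref{lem:SFV3.2} and \prettyref{prop:indstabdivides}.
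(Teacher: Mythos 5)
Your proposal is correct and follows essentially the same route as the paper: both arguments use \prettyref{lem:SFV3.2} to produce $x\in C_{\wt{G}}(u)$ with $\det(x)=\delta^k$ of order $(q-\epsilon)_2$ (using that $k$ is odd), note that $x$ lies in $P_d$ and hence fixes $\varphi_u$, and then apply \prettyref{lem:GGGRarg} with $\sigma$ trivial to conclude that $x$ stabilizes a conjugate of $\chi$, forcing $[\wt{G}:I]$ to be odd. Your slightly more general framing via $C_{\wt{G}}(u)\subseteq I$ is a harmless repackaging of the same argument.
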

 \begin{proof}
 Write $\wt{\Gamma}_u=[U_{1,d}:U_{2,d}]^{-1/2}\ind_{U_{d,2}}^{\wt{G}}(\varphi_u)$.  By \prettyref{lem:SFV3.2}, there is some $x\in C_{\wt{G}}(u)$ with determinant $\delta^k$, where $\delta$ is a $(q+1)_2$-root of unity in $\F_{q^2}^\times$.  In particular, $|\det(x)|=(q+1)_2$ since $k$ is odd, and $\varphi_u^x=\varphi_u$ since $x$ normalizes $U_{d,2}$ as in the proof of \prettyref{lem:varphiuconj}.  Then applying \prettyref{lem:GGGRarg} with $\sigma$ trivial yields that some conjugate $\chi_0$ of $\chi$ satisfies $\chi_0^x=\chi_0$.  This implies $[I:G]$ is divisible by $(q+1)_2$, so that $[\wt{G}:I]$ must be odd.
 \end{proof}
 
For the remainder of this section, we will consider the case  $\epsilon=-1$, so that $\wt{G}=GU_n(q)$ and $G=SU_n(q)$.  In particular, \prettyref{prop:oddelemoddindex} yields that if $[\wt{G}:I]$ is even, then neither condition in \prettyref{cor:unipconj} holds if $n$ is divisible by $4$, and condition 1 holds if $n\equiv 2\pmod 4$, taking into account \prettyref{rem:condsnmod4}. 
 
 \begin{proposition}\label{prop:n2mod4}
 Let $\epsilon=-1$ and suppose that $q\equiv \eta\mod 4$ is nonsquare with $\eta\in\{\pm1\}$ 
 and that $n\equiv 2\pmod 4$.  Then the converse of \prettyref{lem:sgn} holds.  That is, for $\chi\in\irr(G|\wt{\chi}_\lambda)$, $\F_\lambda=\Q(\chi)$ if and only if $\mathrm{sgn} \lambda\neq\lambda$.  Alternatively, $\F_\lambda(\sqrt{\eta p})=\Q(\chi)$ if and only if $\mathrm{sgn} \lambda = \lambda$.
 \end{proposition}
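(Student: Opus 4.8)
By \prettyref{lem:sgn} we already know that $\mathrm{sgn}\lambda\neq\lambda$ implies $\F_\lambda=\Q(\chi)$, and by \prettyref{prop:initialcase} we always have $\F_\lambda\subseteq\Q(\chi)\subseteq\F_\lambda(\sqrt{\eta p})$ in this situation: since $n\equiv2\pmod4$ and $q$ is odd we have $n_2=2\leq(q+1)_2$, so $n/(n,q+1)$ is odd and case~(2) of \prettyref{prop:initialcase} applies; moreover $\F_\lambda\subseteq\Q(\lambda)=\Q(\zeta_m)$ with $\gcd(m,p)=1$, so $\sqrt{\eta p}\notin\F_\lambda$ and $\F_\lambda(\sqrt{\eta p})$ is a genuine quadratic extension of $\F_\lambda$. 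So the plan is to prove the one missing implication: if $\mathrm{sgn}\lambda=\lambda$, then $\Q(\chi)\neq\F_\lambda$ (hence $\Q(\chi)=\F_\lambda(\sqrt{\eta p})$), and since the field of values is invariant under $\wt{G}$-conjugation we are free to replace $\chi$ by any $\wt{G}$-conjugate throughout.

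First I would produce the right Galois automorphism: using that $\Q(\zeta_m)$ and $\Q(\zeta_p)$ are linearly disjoint over $\Q$, choose $\sigma\in\gal(\bar{\Q}/\Q(\lambda))\subseteq\gal(\bar{\Q}/\F_\lambda)$ restricting to the generator of $\gal(\Q(\zeta_p)/\Q)$, i.e. $\sigma(\zeta_p)=\zeta_p^b$; then $\sigma$ fixes $\F_\lambda$ pointwise and sends $\sqrt{\eta p}\mapsto-\sqrt{\eta p}$. Next I would bring in the GGGR machinery exactly as in the proof of \prettyref{prop:initialcase}: pick (by Kawanaka) a GGGR $\wt{\Gamma}_u$ of $\wt{G}$ with $\langle\wt{\Gamma}_u,\wt{\chi}_\lambda\rangle_{\wt{G}}=1$ that is induced from a GGGR $\Gamma_u$ of $G$, with associated unipotent $u\in\mathcal{C}\cap U_{d,2}\subseteq GU_n(q)$. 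Because $n\equiv2\pmod4$, \prettyref{rem:condsnmod4} tells us $u$ meets condition (1) or (2) of \prettyref{cor:unipconj}, so that corollary (together with \prettyref{lem:conjinP} to place the conjugating element inside $P_d$, and \prettyref{lem:varphiuconj}) supplies $x\in P_d$ normalizing $U_{d,2}$ with $u^x=u^b$, $\varphi_u^x=\varphi_u^b$, and $|\det(x)|=(q+1)_2$. Since the values of $\varphi_u$ are $p$th roots of unity, $\sigma\varphi_u=\varphi_u^b=\varphi_u^x$, so \prettyref{lem:GGGRarg} yields a $\wt{G}$-conjugate $\chi_0$ of $\chi$ with $\sigma\chi_0=\chi_0^x$.

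The crux is then to check $\chi_0^x\neq\chi_0$. Here I would use \prettyref{prop:indstabdivides}, which (since $\chi_0\in\irr(G|\wt{\chi}_\lambda)$) identifies $\stab_{\wt{G}}(\chi_0)$ with the elements of determinant in $\mathcal{I}(\lambda)$, and \prettyref{lem:orbiteven}, which under $\mathrm{sgn}\lambda=\lambda$ forces $\mathcal{I}(\lambda)$ into $\ker(\mathrm{sgn})$, the unique index-$2$ subgroup of the cyclic group $T_1\cong\wt{G}/G$ of order $q+1$. Since $\det(x)$ has order exactly $(q+1)_2$, it cannot sit in the subgroup $\ker(\mathrm{sgn})$, whose Sylow $2$-subgroup has order only $(q+1)_2/2$, hence not in $\mathcal{I}(\lambda)$; thus $x\notin\stab_{\wt{G}}(\chi_0)$ and $\chi_0^x\neq\chi_0$. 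Consequently $\sigma\chi_0\neq\chi_0$ while $\sigma$ fixes $\F_\lambda$ pointwise, so $\Q(\chi)=\Q(\chi_0)\not\subseteq\F_\lambda$, which (given $\F_\lambda\subseteq\Q(\chi)\subseteq\F_\lambda(\sqrt{\eta p})$ and the index being $2$) gives $\Q(\chi)=\F_\lambda(\sqrt{\eta p})$; the final equivalence in the statement follows since $\F_\lambda\neq\F_\lambda(\sqrt{\eta p})$.

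I expect the main obstacle to be precisely obtaining a conjugating element $x$ whose determinant has the full $2$-part $(q+1)_2$, which is what makes $x$ escape $\mathcal{I}(\lambda)\subseteq\ker(\mathrm{sgn})$. This is exactly where the hypothesis $n\equiv2\pmod4$ is used — for $4\mid n$ one instead only gets, via \prettyref{lem:unipconj0mod4}, determinant order $(q^2-1)_2/n_2$, which may well lie in $\mathcal{I}(\lambda)$, so that case genuinely requires a different argument.
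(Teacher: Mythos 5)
Your proposal is correct and follows essentially the same route as the paper's proof: produce a Galois automorphism over $\F_\lambda$ acting as $\zeta_p\mapsto\zeta_p^b$, use Corollary \ref{cor:unipconj} with Lemmas \ref{lem:varphiuconj}, \ref{lem:conjinP}, and \ref{lem:GGGRarg} to get $\sigma\chi_0=\chi_0^x$ with $|\det(x)|=(q+1)_2$, and rule out $x\in\stab_{\wt{G}}(\chi_0)$ because $\mathrm{sgn}\lambda=\lambda$ forces the $2$-part of $\mathcal{I}(\lambda)$ down to $(q+1)_2/2$. The paper phrases that last step via $[\wt{G}:I]_2=2$ from Proposition \ref{prop:indstabdivides}, but this is the same computation.
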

 \begin{proof}

 We must show that if $\mathrm{sgn}\lambda= \lambda$, then $\F_\lambda\neq \Q(\chi)$.  First, recall that this condition on $\lambda$ is equivalent to the condition that $[\wt{G}:I]=[T_1:\mathcal{I}(\lambda)]$ is even, by \prettyref{lem:orbiteven}. Since $n_2=2$,  \prettyref{prop:indstabdivides} yields that $[\wt{G}:I]_2=2$.    
 This means that no $\wt{G}$-conjugate of $\chi$ can be fixed by any  $\wt{g}\in \wt{G}$ whose determinant satisfies $|\det(\wt{g})|_2=(q+1)_2$.
 
As an abuse of notation, we let $\tau$ also denote the unique element of $\mathrm{Gal}(\F_\lambda(\zeta_p)/\F_\lambda)$ that restricts to our fixed generator $\tau$ of $\mathrm{Gal}(\Q(\zeta_p)/\Q)$.
In the notation of \prettyref{lem:GGGRarg}, we have $\varphi_u^x=\varphi_u^b=\tau\varphi_u$ for some $x\in P_{d}$ satisfying $|\det(x)|=(q+1)_2$, by \prettyref{cor:unipconj} and Lemmas \ref{lem:varphiuconj} and \ref{lem:conjinP}.  Then by \prettyref{lem:GGGRarg}, there is a conjugate $\chi_0$ of $\chi$ such that $\chi_0^x=\tau\chi_0$.  In particular, note that the condition on the determinant yields that $\chi_0^x\neq \chi_0$, so $\tau\chi_0\neq\chi_0$.  Since $\chi$ and $\chi_0$ have the same field of values, we see $\tau\chi\neq \chi$, and we have $\F_\lambda\neq \Q(\chi)$.
  \end{proof}
 
 \begin{proposition}\label{prop:n0mod4}
 Let $\epsilon=-1$ and suppose that $q\equiv 3\pmod 4$ and $4\leq n_2\leq (q+1)_2$, and let $\chi\in\irr(G|\wt{\chi}_\lambda)$ and $I:=\stab_{\wt{G}}(\chi)$. Then $\F_\lambda=\Q(\chi)$ if and only if $[\wt{G}:I]_2<n_2$.
 \end{proposition}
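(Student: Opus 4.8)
The plan is to reduce to the situation of \prettyref{lem:unipconj0mod4} and then carry out a careful $2$-adic bookkeeping on determinants. First I would record the standing facts. Since $q\equiv 3\pmod 4$ we automatically have $p\equiv 3\pmod 4$ and $q$ nonsquare, so $\eta=-1$ and $\K:=\F_\lambda(\sqrt{-p})$ is a degree-$2$ extension of $\F_\lambda$ contained in $\F_\lambda(\zeta_p)$ (using $\F_\lambda\cap\Q(\zeta_p)=\Q$ and $\Q(\sqrt{-p})\subseteq\Q(\zeta_p)$). Moreover $n\equiv 0\pmod 4$, as $4\mid n_2\mid n$; and since $n_2\le (q+1)_2$ the integer $n/(n,q-\epsilon)=n/(n,q+1)$ is odd, so we are in case (2) of \prettyref{prop:initialcase} and hence $\F_\lambda\subseteq\Q(\chi)\subseteq\K$. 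Finally, by \prettyref{prop:indstabdivides}, $[\wt G:I]=[T_1:\mathcal I(\lambda)]$ divides $\gcd(q-\epsilon,n)=\gcd(q+1,n)$, so $[\wt G:I]_2$ is a $2$-power dividing $n_2$; in particular $[\wt G:I]_2\le n_2$.

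If $\mathrm{sgn}\,\lambda\neq\lambda$, then $[\wt G:I]$ is odd by \prettyref{lem:orbiteven}, so $[\wt G:I]_2=1<n_2$ (as $n_2\ge 4$), while $\F_\lambda=\Q(\chi)$ by \prettyref{lem:sgn}; thus both sides of the asserted equivalence hold and we are done. So I would assume henceforth $\mathrm{sgn}\,\lambda=\lambda$, i.e. $[\wt G:I]$ is even. Choose (by Kawanaka) a GGGR $\wt\Gamma_u$ of $\wt G$ with $u\in\mathcal C\cap U_{d,2}$ and $\langle\wt\Gamma_u,\wt\chi_\lambda\rangle_{\wt G}=1$. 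Because $[\wt G:I]$ is even, \prettyref{prop:oddelemoddindex} shows $u$ has no elementary divisor $(t-1)^k$ with $k$ odd; and since $n\equiv 0\pmod 4$, \prettyref{rem:condsnmod4} then shows $u$ also fails condition (1) of \prettyref{cor:unipconj}. Hence neither hypothesis of \prettyref{cor:unipconj} holds, and \prettyref{lem:unipconj0mod4} applies: there is $x\in GU_n(q)$ with $u^x=u^b$ and $|\det(x)|=\tfrac{(q^2-1)_2}{n_2}=\tfrac{2(q+1)_2}{n_2}$, using $(q-1)_2=2$. Note $\det(x)\in T_1$ has $2$-power order. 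By \prettyref{lem:conjinP} this $x$ lies in $P_d$, hence normalizes $U_{d,2}$, and by \prettyref{lem:varphiuconj} it satisfies $\varphi_u^x=\varphi_u^b$.

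Next I would pick $\sigma\in\gal(\bar\Q/\F_\lambda)$ restricting to $\tau$ on $\Q(\zeta_p)$ (possible since $\F_\lambda\cap\Q(\zeta_p)=\Q$), so that $\sigma\varphi_u=\varphi_u^b=\varphi_u^x$, and apply \prettyref{lem:GGGRarg} to get a $\wt G$-conjugate $\chi_0$ of $\chi$ with $\sigma\chi_0=\chi_0^x$. Since $\tau$ sends $\sqrt{-p}\mapsto-\sqrt{-p}$, the automorphism $\sigma$ restricts to the nontrivial element of $\gal(\K/\F_\lambda)$; together with $\F_\lambda\subseteq\Q(\chi_0)\subseteq\K$ this gives $\Q(\chi_0)=\F_\lambda$ if and only if $\sigma\chi_0=\chi_0$. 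On the other hand, $\chi_0^x=\chi_0$ if and only if $x\in\stab_{\wt G}(\chi_0)=I$ (the stabilizer being the $\det$-preimage of $\mathcal I(\lambda)$ by \prettyref{prop:indstabdivides}, independently of the chosen constituent), i.e. $\det(x)\in\mathcal I(\lambda)$. As $\mathcal I(\lambda)$ is a subgroup of the cyclic group $T_1$ and $\det(x)$ has $2$-power order $\tfrac{2(q+1)_2}{n_2}$, this is equivalent to $\tfrac{2(q+1)_2}{n_2}\mid|\mathcal I(\lambda)|_2$, hence to $[\wt G:I]_2=\tfrac{(q+1)_2}{|\mathcal I(\lambda)|_2}\le\tfrac{n_2}{2}$, i.e. (since $[\wt G:I]_2\mid n_2$) to $[\wt G:I]_2<n_2$. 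Chaining these equivalences and using $\Q(\chi)=\Q(\chi_0)$ yields $\F_\lambda=\Q(\chi)\iff[\wt G:I]_2<n_2$.

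I expect the real crux to be step (ii): the exact determinant order $\tfrac{2(q+1)_2}{n_2}$ supplied by \prettyref{lem:unipconj0mod4}, whose factor of $2$ (coming from $q\equiv 3\pmod 4$, so $(q-1)_2=2$) together with the sharpness of the bound $n_2\le (q+1)_2$ is exactly what makes the cutoff land precisely at $[\wt G:I]_2=n_2$ rather than somewhere else. The other delicate point is (i): confirming that our $u$ lies outside the scope of \prettyref{cor:unipconj}, so that \prettyref{lem:unipconj0mod4} is the applicable statement — this is where \prettyref{prop:oddelemoddindex} and \prettyref{rem:condsnmod4} are essential. Everything else is an assembly of lemmas already in hand (Kawanaka, \prettyref{lem:conjinP}, \prettyref{lem:varphiuconj}, \prettyref{lem:GGGRarg}, Clifford theory, and \prettyref{prop:indstabdivides}).
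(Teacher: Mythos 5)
Your proposal is correct and follows essentially the same route as the paper's proof: reduce via Lemma \ref{lem:sgn}, Proposition \ref{prop:oddelemoddindex}, and Remark \ref{rem:condsnmod4} to the setting of Lemma \ref{lem:unipconj0mod4}, produce $x$ with $|\det(x)|=\tfrac{2(q+1)_2}{n_2}$, apply Lemmas \ref{lem:varphiuconj}, \ref{lem:conjinP}, and \ref{lem:GGGRarg} to get $\sigma\chi_0=\chi_0^x$, and then decide membership of $\det(x)$ in the cyclic group $\mathcal{I}(\lambda)$ by comparing $2$-parts. The only difference is expository: you spell out the preliminary reductions and the field-theoretic equivalence $\Q(\chi_0)=\F_\lambda\iff\sigma\chi_0=\chi_0$ more explicitly than the paper does.
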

 \begin{proof}
 First, note that $[\wt{G}:I]_2\leq n_2$ by \prettyref{prop:indstabdivides}.  Note that by \prettyref{lem:sgn}, we may assume that $[\wt{G}:I]$ is even and therefore that $\chi\in\irr(G|\wt{\Gamma}_u)$ where $u$ has no odd-power elementary divisor, by \prettyref{prop:oddelemoddindex}.
 By Lemmas \ref{lem:unipconj0mod4}, \ref{lem:varphiuconj}, and \ref{lem:conjinP}, there is some $x\in \wt{G}$ such that $\varphi_u^x=\varphi_u^b=\tau\varphi_u$ and $|\det(x)|=\frac{2(q+1)_2}{n_2}$, which is divisible by $2$ since $n_2\leq(q+1)_2$.  By \prettyref{lem:GGGRarg}, there is a conjugate $\chi_0$ of $\chi$ such that $\chi_0^x=\tau\chi_0$.
  
  Suppose first that $[\wt{G}:I]_2=n_2$, so that $x$ cannot stabilize $\chi_0$, since $[I:G]_2=\frac{(q+1)_2}{n_2}$ (and the same is true for the stabilizer of $\chi_0$).   This yields that $\chi_0\neq \tau\chi_0$, so the same holds for $\chi$.  Hence if $[\wt{G}:I]_2=n_2$, then $\F_\lambda\neq\Q(\chi)$.
 
 Now suppose $[\wt{G}:I]_2<n_2$.  That is, $[\wt{G}:I]_2\leq \frac{n_2}{2}$.  Then the stabilizer of $\chi_0$ must contain $x$, since $I/G\cong \mathcal{I}(\lambda)$ is cyclic and contains the unique subgroup of $\wt{G}/G$ of size $\frac{2(q+1)_2}{n_2}$.  Then $\chi_0=\tau\chi_0$, and the same is true for $\chi$, so $\F_\lambda=\Q(\chi)$.
 \end{proof}

 \begin{proof}[Proof of \prettyref{thm:turullext1}]
Note that the case $[\wt{G}:I]_2=n_2$, for any $n$ even, is equivalent to having $\alpha\lambda=\lambda$ for any $\alpha\in \wh{T}_1$ of order $n_2$.  In case $n\equiv 2\pmod 4$, we remark that this $\alpha$ is $\mathrm{sgn}$.  Hence Propositions \ref{prop:initialcase}, \ref{prop:n2mod4}, and  \ref{prop:n0mod4} combine to yield the statement.
 \end{proof}

\section*{Acknowledgements}

The authors were each supported in part by grants from the Simons Foundation (Awards \#351233 and \#280496, respectively).

Part of this work was completed while the first-named author was in residence at the Mathematical Sciences Research Institute in Berkeley, California during the Spring 2018 semester program on Group Representation Theory and Applications, supported by the National Science Foundation under Grant No. DMS-1440140.  She thanks the institute and the organizers of the program for making her stay possible and providing a collaborative and productive work environment.

\bibliographystyle{alpha}
\bibliography{researchreferences}
\end{document}